\newcommand{\blind}{0}
\newcommand\myshade{85}
\colorlet{mylinkcolor}{YellowOrange}
\colorlet{mycitecolor}{Aquamarine}
\colorlet{myurlcolor}{violet}
\titlespacing{\section}{0pt}{*0}{*0}
\titlespacing{\subsection}{0pt}{*0}{*0}
\titlespacing{\subsubsection}{0pt}{*0}{*0}
\newcommand{\bfm}[1]{\ensuremath{\mathbf{#1}}}
\def\bzero{\bfm 0}
   \def\bA{\bfm A}  
   \def\bC{\bfm C}  
   \def\bD{\bfm D}  
\def\be{\bfm e}   \def\bE{\bfm E}  \def\EE{\mathbb{E}}
\def\bff{\bfm f}  \def\bF{\bfm F}  
   \def\bH{\bfm H}  
   \def\bI{\bfm I}
   \def\bM{\bfm M}
   \def\bQ{\bfm Q}  
   \def\bR{\bfm R}  \def\RR{\mathbb{R}}
   \def\bS{\bfm S}  
\def\bu{\bfm u}   \def\bU{\bfm U}  
   \def\bV{\bfm V}  
   \def\bW{\bfm W}  
\def\bx{\bfm x}   \def\bX{\bfm X}  
   \def\bY{\bfm Y}
\def\calD{{\cal  D}} 
\def\calE{{\cal  E}} 
\def\calF{{\cal  F}}
\def\calM{{\cal  M}} 
\def\calN{{\cal  N}} 
\def\calO{{\cal  O}}
\def\calU{{\cal  U}}
\def\calY{{\cal  Y}}
\newcommand{\bfsym}[1]{\ensuremath{\boldsymbol{#1}}}
 \def\bmu{\bfsym {\mu}}
           \def\bepsilon{\bfsym \varepsilon}
              \def\bSigma{\bfsym \Sigma}
         \def\bLambda {\bfsym {\Lambda}}
           \def\bOmega {\bfsym {\Omega}}
 \def\bPhi{\bfsym {\Phi}}
  \def\bPsi{\bfsym {\Psi}}
  \renewcommand{\hat}{\widehat}
  \renewcommand{\tilde}{\widetilde}
            \def\hbC{\hat \bC}
                \def\hbR{\hat \bR}
\providecommand{\abs}[1]{\left\lvert#1\right\rvert}
\providecommand{\norm}[1]{\left\lVert#1\right\rVert}
  \providecommand{\paran}[1]{\left( #1 \right)}
  \providecommand{\braces}[1]{\left\{ #1 \right\}}
\newtheorem{assumption}{Assumption}
\newtheorem{theorem}{Theorem}
\newtheorem{remarkx}{Remark}
\newenvironment{remark}{\begin{remarkx} \em}{\end{remarkx}}
\newcommand{\E}[1]{{\mathbb{E}} \left[ #1 \right]}    
\newcommand{\Var}[1]{{\rm Var} \left[ #1 \right]}
\newcommand{\Cov}[1]{{\rm Cov}  \left[ #1 \right]}
\newcommand{\vect}[1]{{\textsc{vec}} \left( #1 \right)}
\newcommand{\Tr}[1]{{\rm Tr} \left( #1 \right) }
\newcommand{\Op}[1]{{\calO_p} \left( #1 \right) }
\newcommand{\op}[1]{{ {\rm o}_p} \left( #1 \right) }
\newcommand{\Oc}[1]{{\rm O} \left( #1 \right) }
\newcounter{question}
\numberwithin{equation}{section}  
\newcounter{CondCounter}
\definecolor{royalpurple}{rgb}{0.47, 0.32, 0.66}
\newcommand{\convdist}{\stackrel{\calD}{\longrightarrow}}
\newcommand{\defeq}{\triangleq}
\renewcommand{\bar}{\overline}
\newcommand{\bigO}[1]{ \mathcal{O} \left( #1 \right) }
\newcommand{\mybibsty}{chicago}
\newcommand{\mybibfile}{matvari}
\begin{document}
\renewcommand{\baselinestretch}{1.2}

\if0\blind
{
	\title{\bf Statistical Inference for High-Dimensional \\ Matrix-Variate Factor Models}
	\author[1]{Elynn Y. Chen \thanks{Elynn Chen is supported in part by NSF Grants DMS-1803241.}}
	\author[2]{Jianqing Fan \thanks{Jianqing Fan is Frederick L. Moore ’18 Professor of Finance, Department of Operations Research and Financial Engineering, Princeton University, Princeton, NJ 08544, USA. His research is supported in part by ONR grant N00014-19-1-2120, NSF grant DMS-1712591 and NIH grant 5R01-GM072611-16. Corresponding author: jqfan@princeton.edu.}}
	\affil[1]{EECS, University of California, Berkeley}
	\affil[1,2]{ORFE, Princeton University}
	\date{\today}
	\maketitle
} \fi

\if1\blind
{
    \title{\bf Statistical Inference for High-Dimensional \\Matrix-Variate Factor Models}
    \author{}
	\date{\today}
	\maketitle
	\vspace{9em}
} \fi

\def \defeq{\triangleq}

\def\r#1{\textcolor{red}{#1}}
\def\b#1{\textcolor{blue}{#1}}

\maketitle

\begin{abstract}

This paper considers the estimation and inference of the low-rank components in high-dimensional matrix-variate factor models, where each dimension of the matrix-variates ($p \times q$) is comparable to or greater than the number of observations ($T$).
We propose an estimation method called $\alpha$-PCA that preserves the matrix structure and aggregates mean and contemporary covariance through a hyper-parameter $\alpha$.
We develop an inferential theory, establishing consistency, the rate of convergence, and the limiting distributions, under general conditions that allow for correlations across time, rows, or columns of the noise.
We show both theoretical and empirical methods of choosing the best $\alpha$, depending on the use-case criteria.
Simulation results demonstrate the adequacy of the asymptotic results in approximating the finite sample properties.
The $\alpha$-PCA compares favorably with the existing ones.
Finally, we illustrate its applications with a real numeric data set and two real image data sets.
In all applications, the proposed estimation procedure outperforms previous methods in the power of variance explanation using out-of-sample 10-fold cross-validation.

\vspace{1em}

\noindent {\it Key words}: Matrix-variate; Latent low rank; Factor models; Asymptotic normality; High-dimension.
\end{abstract}




\section{Introduction} \label{sec:intro}

%
Large scale matrix-variate data have been widely observed nowadays in diverse fields, such as neuroscience, health care, economics, and social networking.
For example, the monthly import-export volumes among countries naturally form a dynamic sequence of matrix-variates, each of which representing a weighted directional transportation network.
Another example is dynamic panels, such as typical electronic health records (EHRs). In the data-rich intensive care unit (ICU) environment, vitals and other medical tests are measured for different patients at sequential time points. At each time point, the observation is a matrix whose rows represent different patients and whose columns represent demographic information, vitals, lab values, etc.
Thirdly, 2-D image data can also be modeled as matrix-variate data to preserve the spatial information, where each entry of an image matrix corresponds to the intensity of colors of each pixel.
Development of statistical methods for analyzing large scale matrix-variate data is still in its infancy, and as a result, scientists frequently analyze matrix-variate observations by separately modeling each dimension or ‘flattening’ them into vectors. This destroys the intrinsic multi-dimensional structure and misses important patterns in such large scale data with complex structures, and thus leads to sub-optimal results.

The very first questions to ask when facing large scale data with complex structures are: ``Is there a simpler structure behind the massive data set?'' and ``How can we infer the simpler structure from the noisy observations?''
Simpler structures provide better understanding of the problem, reveal more insights into the data, and simplify down-stream analysis.
This paper addresses those questions and provides statistically sound solutions from the perspective of latent factor models.
The proposed method deals with matrix-variate observations directly and works for both independent and weakly-dependent observations.
To the best of our knowledge, we are the first to provide the asymptotic distributions of the estimators for the proposed model.

We specifically consider the following matrix-variate factor model for observations $\bY_t \in \mathbb{R}^{p \times q}$, $1 \le t \le T$:
\begin{equation} \label{eqn:mfm}
\bY_t = \bR \bF_t \bC^\top + \bE_t,
\end{equation}
where $\bY_t$ is driven by a latent factor matrix $\bF_t \in \mathbb{R}^{k \times r}$ of smaller dimensions (i.e. $k \ll p$ and $r \ll q$), plus a noise matrix $\bE_t$.
Matrices $\bR$ and $\bC$ are a $p \times k$ and $q \times r$ row and column loading matrices, respectively.
The noise term $\bE_t$ is assumed to be uncorrelated with $\bF_t$, but is allowed to be weakly correlated across rows, columns and observations.

We propose an estimation procedure, namely $\alpha$-PCA, that aggregates the information in both first and second moments and extract it via a spectral method.
Specifically, we define the following statistics
\begin{eqnarray}
& \hat \bM_R & \defeq \frac{1}{pq}\paran{ \paran{1+\alpha}\cdot\bar\bY\bar \bY^\top + \frac{1}{T}\sum_{t=1}^{T}\paran{\bY_t-\bar\bY}\paran{\bY_t-\bar\bY}^\top},  \label{eqn:hat-MR-aggr} \\
& \hat \bM_C & \defeq \frac{1}{pq} \paran{\paran{1+\alpha}\cdot\bar\bY^\top\bar\bY + \frac{1}{T}\sum_{t=1}^{T}\paran{\bY_t - \bar \bY}^\top \paran{\bY_t - \bar \bY}}, \label{eqn:hat-MC-aggr}
\end{eqnarray}
where $\alpha\in[-1,+\infty)$ is a hyper-parameter balancing the information of the first and second moments,  $\bar\bY=\frac{1}{T}\sum_{t=1}^{T}\bY_t$ is the sample mean,  $\frac{1}{T}\sum_{t=1}^{T}\paran{\bY_t-\bar\bY}\paran{\bY_t-\bar\bY}^\top$ and  $\frac{1}{T}\sum_{t=1}^{T}\paran{\bY_t - \bar \bY}^\top \paran{\bY_t - \bar \bY}$ are the sample \textit{row} and \textit{column covariance matrix}, respectively.
Estimations of $\bR$ and $\bC$ can be obtained, respectively, as $\sqrt{p}$ times the top $k$ eigenvectors of $\hat\bM_R$ and $\sqrt{q}$ times the top $r$ eigenvectors of $\hat\bM_C$, in descending order by corresponding eigenvalues.
We explain its interpretation and relations to several estimation procedures in Section \ref{sec:estimation}.

In the community of image signal processing, model \eqref{eqn:mfm} and estimation methods such as $(2D)^2$-PCA have been actively studied \citep{yang2004two, zhang20052d,kong2005generalized,pang2008binary,kwak2008principal,li2010l1,meng2012improve,wang2015robust}.
However, their studies mainly focus on the algorithmic properties and give no statistical properties on the estimators that are highly demanded in the medical, economics, and social applications nowadays.
The proposed $\alpha$-PCA aggregates the first moment (weighted by $1+\alpha$) and the second moment, where $\alpha\in[-1,+\infty)$ is a hyper-parameter in \eqref{eqn:hat-MR-aggr} and \eqref{eqn:hat-MC-aggr}.
It encompasses $(2D)^2$-PCA as a special case of $\alpha=-1$, which is not a best choice in general.
We show theoretically and empirically how to choose optimal $\alpha$ under different criteria, such as achieving most efficient estimators and providing most accurate predictors.
Also, we are the first to apply model \eqref{eqn:mfm} to provide convergence and asymptotic normality results of the estimators under a very general setting.

With respect to statistical analyses, \cite{wang2019factor} and \cite{chen2019constrained} consider a similar model in the bilinear form \eqref{eqn:mfm}, yet under a very different setting where $\bE_t$ is assumed to be white noise \citep{ lam2012factor, lam2011estimation}.
\cite{chen2019factor} extends previous results to time series of tensor observations, again assuming noise tensors are not temporally correlated.
This line of research discards contemporaneous covariance and utilizes only the auto-covariance between $\bY_t$ and $\bY_{t-h}$ with $h \ge 1$.
The white noise assumption for $\bE_t$ simplifies the problem by removing the error covariance $\E{\bE_t \bE_{t-h}^\top} = 0$ ($h\ge1$) from $\E{(\bY_t - \EE \bY_t) (\bY_{t-h} - \EE \bY_{t-h})^\top}$, but the resulting data can have little information for the quantity that we would like to learn.
Indeed, the most informative component $\E{(\bY_t - \EE \bY_t) (\bY_{t} - \EE \bY_{t})^\top}$ is excluded.
The $\ell_2$ convergence rates obtained by \cite{wang2019factor} for the estimators of $\bR$ and $\bC$ are both $T^{-1/2}$ with  strong factors (i.e. Assumption \ref{assume:loading_matrix} in Section \ref{sec:theory}).
Although they use auto-covariance matrices, their results are comparable to the noiseless version of our model \eqref{eqn:mfm}.
Under the noiseless setting when term $\bE_t$ in equation \eqref{eqn:mfm} is ignored, our results give faster convergence rates of ${(qT)}^{-1/2}$ for $\bR$ and ${(pT)}^{-1/2}$ for $\bC$ with strong factors, same as those obtained in \cite{chen2019factor} for order-2 tensor observations.

Even in the case of $\alpha = -1$, our models and methods are very different.
We need to deal with the bias term $\E{\bE_t \bE_t^\top} \ne 0$, while the analyses in \cite{wang2019factor,chen2019factor} are largely simplified by assuming $\bE_t$ as white noise and not including contemporaneous covariance $\E{(\bY_t - \EE \bY_t) (\bY_{t} - \EE \bY_{t})^\top}$.
Furthermore, our assumption is more general in that $\bE_t$ is allowed to be weakly correlated across rows, columns and observations.  

The contributions of this paper are three folds.
Firstly, we expand considerably the scope of applicability of \cite{wang2019factor} and related work, making the theory and methods useful for a wider range of applications.
The previous work uses only cross-covariance to learn the latent factors and factor loadings.
This not only requires the restrictive assumption that $\{\bE_t\}$ is a white noise series, but also becomes ineffective when auto-correlations are weak.
This makes the procedure inapplicable to i.i.d matrix-variate data, such as gene or proteomic expression data across samples and multiple image data illustrated in Section \ref{sec:application}.2.
It can not be applied to financial return data due to the efficient market hypothesis.
In contrast, we use the most informative piece of information: the contemporary covariance matrix.
This modification makes the procedure applicable to i.i.d matrix-variate data and weakly auto-correlated data.

In addition, we point out that the first moments also provide useful information and thoroughly incorporate this by aggregating it with the second moments via a weighted spectral method.
Theorem 2 shows precisely how much the benefit is (if any).
We show how to choose the parameter $\alpha$ in real applications and further point out a generalization of this idea to yield an even more powerful method by incorporating the auto-covariance as well \citep{wang2019factor}.

On theoretical aspects, we establish new results on the asymptotic normality and the optimal $\alpha$ of the $\alpha$-PCA.
They are useful in constructing the confidence intervals of the estimators and also in choosing the optimal values of $\alpha$.

\subsection{More related work}

Besides the literature in image processing and matrix-variate factor models, this paper is related to the literature of vector factor models and statistical tensor data analysis.
Model \eqref{eqn:mfm} can be seen as a generalization of the vector factor model \citep{bai2002determining, bai2003inferential,fan2013large,chang2015high, fan2018robust,fan2020statistical} to matrix-variate data.
Solving model \eqref{eqn:mfm} directly achieves a better convergence rate in a high-dimensional regime than that which results from applying the vector factor model to vectorized observations.
In particular, consider the following vectorized version of model \eqref{eqn:mfm}:
\begin{equation} \label{eqn:mfm-vec}
\vect{\bY_t} = ( \bC \otimes \bR ) \cdot \vect{\bF_t} + \vect{\bE_t},
\end{equation}
where $\vect{\bY_t} \in \mathbb{R}^{pq}$ and $\vect{\bF_t} \in \mathbb{R}^{kr} $.
The $\ell_2$ convergence rate for $\hat{ \bC \otimes \bR }$ obtained by traditional PCA \citep{bai2003inferential, bai2002determining} is $\min\{pq, T\}^{-1/2}$, without adopting the tensor structure in the loading matrix.
Under similar assumptions, solving model \eqref{eqn:mfm} directly gives a $\ell_2$ convergence rate of $\min\{p, Tq\}^{-1/2}$ for $\hat \bR$ and $\min\{q, Tp\}^{-1/2}$ for $\hat \bC$.
In a high-dimensional regime where $p, q > T$, our method gives better results.
Furthermore, we obtain $\hat \bR$ and $\hat \bC$ by directly solving model~\eqref{eqn:mfm}, more specifically applying PCA to \eqref{eqn:hat-MR-aggr} and \eqref{eqn:hat-MC-aggr}, while one needs to carry out a second step to estimate $\hat \bR$ and $\hat \bC$ from $\hat{ \bC \otimes \bR }$, which may incur further errors \citep{cai2019kopa}.
See remarks after Theorem \ref{thm:Rhat_F_norm_convg} and \ref{thm:asymp normality of R/C}  for more discussion.

Tensor decomposition \citep{kolda2009tensor, kolda2006multilinear} has also been applied to matrix-variate observations. Note that $\{\bY_t\}_{1 \le t \le T}$ form an order-3 tensor of dimension $p \times q \times T$  by stacking $\bY_t$ along the third mode ${1 \le t \le T}$.
Statistical convergence rates in Frobenius norm have been studied in \cite{zhang2018tensor} under the assumption of homogeneous entries in tensor.
However, vanilla Tucker decomposition does not apply directly here. (See Remark 3 for more discussion.)
We allow correlations across rows, columns and observations in $\bE_t$ and also derived the asymptotic normalities for $\hat \bR$ and $\hat \bC$.
Additionally, by focusing on the simplest multi-dimensional objects and connecting them with the matrix-variate normal distribution, our analysis provides statistical insights that are potentially helpful in understanding the behavior of higher-order multi-dimensional observations.
Generalizing our method to higher-order tensor decomposition is an interesting direction for future research.

\subsection{Notation and organization}

We use lowercase letter $x$, boldface letter $\bx$, and boldface capital letter $\bX$ to denote scalar, vector and matrix, respectively.
We use $\bX_{i\cdot}$, $\bX_{\cdot j}$, and $x_{ij}$ to denote the $i$-th row, $j$-th column, and $(i,j)$-th element of a matrix $\bX$, respectively.
For a matrix $\bX$, we use the following matrix norms: maximum norm $\norm{\bX}_{max} \; \defeq \; \underset{ij}{\max} \; \lvert x_{ij} \rvert$, $\ell_1$-norm $\norm{\bX}_1 \; \defeq \; \underset{j}{\max} \; \sum_{i} \lvert x_{ij} \rvert$, $\ell_{\infty}$-norm $\norm{\bX}_{\infty} \; \defeq \; \underset{i}{\max} \; \sum_{j} \lvert x_{ij} \rvert$, and $\ell_2$-norm $\norm{\bX}\; \defeq \; \sigma_1$, where $\sigma_1$ is the largest singular value $\{\sigma_i\}$ of $\bX$ with $\sigma_i$ being the $i$-th largest square root of eigenvalues of $\bX^\top \bX$.
We also use $\norm{\bX}$ for $\ell_2$ norm.
When $\bX$ is a square matrix, we denote by $\Tr{\bX}$, $\lambda_{max} \paran{\bX}$, and $\lambda_{min} \paran{\bX}$ the trace, maximum and minimum singular value of $\bX$, respectively.
We let $[n] \defeq \braces{1, \ldots, n}$ denote the set of integers from $1$ to $n$.

The rest of this paper is organized as follows.
In Section \ref{sec:estimation}, we introduce estimation method for model \eqref{eqn:mfm}.
We develop the asymptotic normality for the estimated loading matrices in Section \ref{sec:theory} and provide consistent estimators of the asymptotic variance-covariance matrices in Section \ref{sec:cov-est}.
In Section \ref{sec:simulation}, we study the finite sample performance of our estimation via simulation.
Section \ref{sec:application} provides empirical studies.
Section \ref{sec:conclusion} concludes.
All proofs and technique lemmas are relegated to Appendix A and B in the supplemental materials.

\section{Estimation}  \label{sec:estimation}


\subsection{Model identification}
We only observe $\bY_t$ and everything on the right hand side of model \eqref{eqn:mfm} is unknown.
Separation of the signal part $\bS_t = \bR \bF_t \bC^\top$ and noise part $\bE_t$ can be achieved by the pervasiveness assumption (i.e. Assumption \ref{assume:loading_matrix}) on loading matrices $\bR$ and $\bC$ and the bounded eigenvalues assumption (i.e. Assumption \ref{assume:cross_row_col_corr}) of noise row and column covariances in Section \ref{sec:theory}.
The latent factor matrix $\bF_t$ and loading matrices $\bR$ and $\bC$ are not separately identifiable.
However, they can be estimated up to an invertible matrix transformation.
Particularly, let $\bH_R \in \RR^{k \times k}$ and $\bH_C \in \RR^{r \times r}$ be two non-singular matrices. The triplets $(\bR, \bF_t, \bC)$ and $(\bR \bH_R^{-1}, \; \bH_R \bF_t \bH_C^\top, \; \bC \bH_C^{-1})$ are equivalent under model \eqref{eqn:mfm}.

Thus instead of the ground truth $\bR^\star$, $\bF_t^\star$ and $\bC^\star$, we aim at estimating transformations of the true values. Without loss of generality, restrict our estimator $\hat \bR$ and $\hat \bC$ such that
\begin{equation} \label{eqn:orthonormal-cond'n}
\frac{1}{p} \hat \bR^\top \hat \bR = \bI, \quad \text{and} \quad \frac{1}{q} \hat \bC^\top \hat \bC = \bI.
\end{equation}
As shown in the Theorem \ref{thm:asymp normality of R/C}, for any ground truth $\bR^\star$, $\bC^\star$, $\bF_t^\star$ and our estimator $\hat\bR$ ($\hat\bC$), there exists an \textit{invertible} $\bH_R$ ($\bH_C$) given in \eqref{eqn:def_HR}  (\eqref{eqn:def_HC}) such that $\hat\bR$ ($\hat\bC$) is a close estimator of $\bR^\star\bH_R$ ($\bC^\star\bH_C$) and $\hat\bF_t$ is an estimator of $ \bH_R^{-1}\bF_t^\star{\bH_C^{-1}}^\top$.
Knowing $\bR^\star\bH_R$, $\bC^\star\bH_C$, and $\bH_R^{-1}\bF_t^\star{\bH_C^{-1}}^\top$ is as good as knowing true $\bR^\star$, $\bC^\star$ and $\bF_t^\star$ for many purposes.
For example, in regression analysis or time series prediction, using $ \bH_R^{-1}\bF_t^\star{\bH_C^{-1}}^\top$ as the regressor will give the same predicted value as using $\bF_t^\star$ as the regressor.
Note that the \textit{true} $\bR^\star$ and $\bC^\star$ do not necessarily satisfy (\ref{eqn:orthonormal-cond'n}).
If they do, then $\bH_R$ and $\bH_C$ approach orthogonal matrices in the limit.

\subsection{Estimation based on spectral aggregation} \label{sec:aggr}

Note that the first moment $\E{\bY_t} = \bR \E{\bF_t} \bC^\top $, which contains also the information of unknown parameters. Similarly, the second moment
\begin{equation*}
   \E {\paran{\bY_t-\E {\bY_t}} \paran{\bY_t-\E{\bY_t}}^\top } = \bR \E{(\bF_t-\E{\bF_t}) \bC^\top\bC (\bF_t-\E{\bF_t})^\top}\bR^\top + \E {\bE_t \bE_t^\top}
\end{equation*}
also contains information about the unknown parameters.  In particular, after noticing the matrix $\E{(\bF_t-\E{\bF_t}) \bC^\top\bC (\bF_t-\E{\bF_t})^\top}$ is of rank $k $ under some mild conditions and ignoring the second term (as justified by the pervasive assumption below), it is easy to see $\bR$ is the same as the top $k$ eigenvectors of the second moment, up to an affine transformation.  This justifies our spectral method based on \eqref{eqn:hat-MR-aggr} and \eqref{eqn:hat-MC-aggr} introduced in the introduction.

Let $\tilde\alpha=\sqrt{\alpha +1}-1$ and
\begin{equation*} \label{eqn:tildeY}
\tilde\bY_t \defeq \bY_t + \tilde\alpha\bar\bY, \quad \tilde\bF_t \defeq \bF_t + \tilde\alpha\bar\bF_t, \quad\text{and}\quad \tilde\bE_t \defeq \bE_t + \tilde\alpha\bar\bE_t.
\end{equation*}
Then we have
\begin{equation} \label{eqn:proj-Y}
\tilde\bY_t = \bR\tilde\bF_t\bC^\top + \tilde\bE_t.
\end{equation}
Equations \eqref{eqn:hat-MR-aggr} and \eqref{eqn:hat-MC-aggr} can be equivalently written as
\begin{equation} \label{eqn:M-tildeY}
\hat\bM_R = \frac{1}{pqT} \sum_{t=1}^{T} \tilde\bY_t\tilde\bY_t^\top, \quad\text{and}\quad\hat\bM_C = \frac{1}{pqT} \sum_{t=1}^{T}\tilde\bY_t^\top\tilde\bY_t,
\end{equation}
which can be viewed as the statistics defined on the transformed data $\tilde\bY_t$. The special case for $\alpha=-1$ corresponds to the sample row and column covariance matrices of the original data.

The estimators $\hat \bR$ and $\hat \bC$ are respectively obtained as $\sqrt{p}$ times the top $k$ eigenvectors of $\hat\bM_R$ and $\sqrt{q}$ times the top $r$ eigenvectors of $\hat\bM_C$, in descending order by corresponding eigenvalues.

\begin{remark}\label{remark:est-1}
    {\em Auto-covariance based estimation.}
    \cite{wang2019factor} and \cite{chen2019constrained} consider a similar model in the bilinear form \eqref{eqn:mfm}, yet under a very different setting where $\bE_t$ is assumed to be white noise.
    The major methodological difference is that \cite{wang2019factor} utilizes only the auto-covariance between $\bY_t$ and $\bY_{t-h}$ with $h \ge 1$, discarding the covariance of $\bY_t$ totally.
    When the data is temporally independent or weakly correlated, the population auto-covariance of lag $h\ge 1$ (signal) is equal to or close to zero and the sample auto-covariance has very low signal noise ratio.  In other words, this kind of methods can not be applied to the cross-sectional data such as high-throughput genomics measurements where $t$ indices an individual or financial return data where predicability is low due to efficient markets.
    The performance comparisons in Section 6 also confirm this concern in real data sets.
\end{remark}

\begin{remark} \label{remark:est-2}
    {\em Spectral aggregation.}
    The proposed method falls in the category of spectral methods which are based on eigen-decomposition or singular value decomposition of moments-type statistics, i.e. matrices $\bM_R$ and $\bM_C$.
    One major difference between statistical methods in this category is how the statistics $\bM_R$ ($\bM_C$) is constructed.
    \cite{wang2019factor} and \cite{chen2019constrained} construct $\bM$ using the auto-covariance and derive the properties of their auto-covariance-based estimators under the assumption that $\bE_t$ is white noise.
    They require that the factors be pervasive cross-section $(p,q)$, and also that the factors be temporally dependent (otherwise the signal part equals zero.)
    The present paper constructs $\bM_R$ ($\bM_C$) using covariance and the theoretical properties are derived under a different set assumptions.

    A very interesting point raised by the referee is that whether we can use both covariance and auto-covariance for spectral aggregation.
    \cite{forni2015dynamic,forni2017dynamic} proposed a full dynamic factor model for vector time series which include both covariance and auto-covariance.
    While we are considering a static factor model \citep{bai2003inferential} here, the information of first moment, covariance and lag-$h$ auto-covariance for $h \ge 1$ can be aggregated to yield an even better performance, as long as $\bE_t$ is white noise.  See \cite{fan2019optimal} for the methods and the results on spectral aggregations.
\end{remark}

\begin{remark} \label{remark:est-3}
    {\em Tensor decomposition.}
    Matrix-variate time series $\bY_t$, $t\in[T]$, is the 2nd-order tensor time series.
    Also, it can be stacked along a third mode of time to form a 3rd-order tensor $\calY \in \RR^{p\times q\times T}$.
    Tucker decomposition \citep{kolda2009tensor, kolda2006multilinear} can be applied to the 3rd-order tensor $\calY$ directly.
    Model \eqref{eqn:mfm} can be written equivalently as a noisy Tucker decomposition $\calY = \calF \times_1 \bR \times_2 \bC \times_3 \bI_{T} + \calE$ where $\times_m$ is the mode $m$ tensor product and $\bI_{T}$ is the identity matrix of size $T$.
    At the same time, Tucker decomposition can be applied to the covariance tensor defined as $\Cov{\bY_t} = \Cov{\bF_t} \times_1 \bR \times_2 \bR \times_3 \bC \times_4 \bC$, where $\Cov{\bY_t}\in\RR^{p\times p \times q \times q}$ with the $ijkl$-th element being $\Cov{ y_{t,ik} y_{t,jl} }$.
    These two problems are constrained Tucker decomposition: the formal restricts that the time-mode loading matrix is the identity matrix $\bI_T$, while the latter restricts that two loadings are exactly the same.
    It is of great interest to extend the current algorithms and theories on Tucker decomposition (See \cite{zhang2018tensor} and references therein) to such constrained Tucker decomposition problems.
\end{remark}

\subsection{Relations to LS, ML and PCA estimators} \label{sec:2mm}

In this section, we provide more interpretation of $\alpha$-PCA.
Our estimation approximates the least squares and maximum likelihood estimators and encompasses PCA type of estimators as a special case with $\alpha=-1$.
The proposed estimator in Section \ref{sec:aggr} approximately minimizes jointly the unexplained variation and bias:
\begin{equation} \label{eqn:opt-obj-l2}
\begin{aligned}
& \underset{\bR, \bC, \{\bF_t\}_{t=1}^T }{\text{minimize}} & & \paran{1+\alpha}\underbrace{\frac{1}{pq}\norm{\bar\bY - \bR\bar\bF\bC^\top}_F^2}_{\text{sample bias}} + \underbrace{\frac{1}{pqT} \sum_{t=1}^{T} \norm{\bY_t-\bR\bF_t\bC^\top}_F^2}_{\text{sample variance}}\\
& \text{subject to} & & \frac{1}{p} \bR^\top \bR = \bI, \enspace \frac{1}{q} \bC^\top \bC = \bI.
\end{aligned}
\end{equation}
The special case $\alpha=-1$ corresponds to the \textit{least squares estimator}.
However, \eqref{eqn:opt-obj-l2} is non-convex.
Thus, instead of solving \eqref{eqn:opt-obj-l2} directly, we may consider an approximate solutions by maximizing row and column variances respectively after projection.

Firstly, $\braces{\bY_t}_{t\in[T]}$ are projected onto $\bR$ and maximize the row variances of $\bR^\top \bY_t$ under the constraint that $\frac{1}{p} \bR^\top \bR = \bI$. On the population level, that is,
\begin{equation*} \label{eqn:opt-obj-R}
\begin{aligned}
& \underset{\bR}{\text{maximize}} & & \Tr{\E{\paran{1+\alpha}\paran{\bR^\top \bar\bY}\paran{\bR^\top\bar\bY}^\top + \paran{\bR^\top \bY_t-\EE[\bR^\top\bY_t]}\paran{\bR^\top \bY_t-\EE[\bR^\top \bY_t]}^\top}} \\
& & & =\;\Tr{pq\cdot\bR^\top\bM_R \bR}, \\
& \text{subject to} & & \frac{1}{p} \bR^\top \bR = \bI,
\end{aligned}
\end{equation*}
where
\begin{equation*}
\bM_R \defeq \paran{1+\alpha}\bM_R^{(1)}+\bM_R^{(2)},\quad \bM_R^{(1)} \defeq \frac{1}{pq}\;\E{\bar\bY\,\bar\bY^\top},\quad\text{and}\quad\bM_R^{(2)} \defeq \frac{1}{pq}\;\E{\paran{\bY_t-\EE[\bar\bY]}\paran{\bY_t-\EE[\bar\bY]}^\top}.
\end{equation*}
Similar expressions can be obtained by using the projections onto $\bC$ and maximize the column variances of $\bY_t \bC$.
Note that a factor of $\frac{1}{pq}$ does not change the column space of $\bM_R$ or $\bM_C$, but will facilitate theoretical analysis and stabilize numerical computation as $p$ and $q$ increase.

With $T$ observations $\braces{ \bY_t }_{t \in [T]}$,  we replace the population mean and covariance matrix by their sample versions
and obtain the maximizer $\hat\bR$ ($\hat\bC$) comprised of $\sqrt{p}$ ($\sqrt{q}$) times top $k$ ($r$) eigenvectors of $\hat\bM_R$ ($\hat\bM_C$) in descending order by corresponding eigenvalues. Thus the estimator defined in Section \ref{sec:aggr} approximately solves \eqref{eqn:opt-obj-l2}.

\subsection{Estimation of the factor and signal matrices}

After estimating $\hat\bR$ and $\hat\bC$ by spectral aggregation described in Section \ref{sec:aggr}, we obtain an estimator of $\bF_t$ using condition (\ref{eqn:orthonormal-cond'n}):
\begin{equation} \label{eqn:F_t_est}
\hat \bF_t = \frac{1}{p q} \hat \bR^\top \bY_t \hat \bC.
\end{equation}
The signal part $\bS_t = \bR \bF_t \bC^\top$ can be estimated by
\begin{equation} \label{eqn:S_t_est}
\hat \bS_t = \frac{1}{p q} \hat \bR \hat \bR^\top \bY_t \hat \bC \hat \bC^\top.
\end{equation}

The above estimation procedure assumes that the latent dimensions $k \times r$ are known.
However, in practice we need to estimate $k$ and $r$ as well.
To determine $k$ and $r$ we could use: (a) the eigenvalue ratio-based estimator, proposed by \cite{ahn2013eigenvalue};
(b) the Scree plot which is standard in principal component analysis.
Let $\hat{\lambda}_1 \ge \hat{\lambda}\ge \cdots \ge \hat{\lambda}_{k} \ge 0$ be the ordered eigenvalues of $\hat \bM_R$. The ratio-based estimator for $k$ is defined as
\begin{equation}  \label{eqn:eigen-ratio}
\hat{k} = \underset{{1 \le j \le k_{\max}}}{\arg \max} \frac{\hat{\lambda}_{j}}{\hat{\lambda}_{j+1}},
\end{equation}
where $k_{\max}$ is a given upper bound.
In practice we may take $k_{\max} = \lceil p/2 \rceil$ or $k_{\max} = \lceil p/3 \rceil$ according to \cite{ahn2013eigenvalue}.
Ratio estimator $\hat r$ is defined similarly with respect to $\hat \bM_C$.
Adjustments of estimated eigenvalues are needed when the optimal $k$ grows with $p$ \citep{fan2020estimating}.

In the next section, we establish theoretical results showing that under high dimensional settings, 
$\hat{\bR}$, $\hat{\bC}$ and $\hat\bF_t$ are consistent estimators under known fixed $k$ and $r$. In addition,  we obtain the asymptotic distributions for $\hat{\bR}$ and $\hat{\bC}$.

\section{Theoretical Properties}   \label{sec:theory}

We first state all the necessary assumptions used in the following sections.
To simplify notation, we drop the $\star$ superscript and let $\bF_t \in \RR^{k \times r}$,  $\bR$, and $\bC$ be the true latent factor, row and column loading matrices, respectively.
Let $\bar \bF = \frac{1}{T} \sum_{t=1}^{T} \bF_t$ and $\bar \bE = \frac{1}{T} \sum_{t=1}^{T} \bE_t$ be the sample means of the factors and the noise, respectively.
Entries in the matrices are respectively denoted as $\bar f_{ij}$ and $\bar e_{ij}$.

%

\begin{assumption} \label{assume:alpha-mixing}
    \textbf{$\alpha$-mixing.}
    The vectorized factor $\vect{\bF_t}$ and noise $\vect{\bE_t}$ are $\alpha$-mixing. Specifically, a vector process $\{ \bx_t, t=0,\pm 1, \pm 2, \cdots \}$ is $\alpha$-mixing if, for some $\gamma > 2$, the mixing coefficients satisfy the condition that
    $$\sum_{h=1}^{\infty} \alpha(h)^{1-2/\gamma} < \infty,$$
    where $ \alpha(h)=\underset{\tau}{\sup} \underset{A \in \mathcal{F}_{-\infty}^{\tau}, B \in \mathcal{F}_{\tau+h}^{\infty}}{\sup} \left| P(A \cap B) - P(A)P(B) \right| $ and $\mathcal{F}_{\tau}^s$ is the $\sigma$-field generated by $\{\bx_t: {\tau} \le t \le s \}$.
\end{assumption}

\begin{assumption} \label{assume:factor-noise-matrix}
    \textbf{Factor and noise matrices.}
     There exists a positive constant $C < \infty$ such that for all $N$ and $T$,
    \begin{enumerate}[label=(\alph*)]
        \item Factor matrix $\bF_t$ is of fixed dimension $k \times r$ and $\EE\norm{\bF_t}^4 \le C$.

        \item For all $i \in [p]$, $j \in [q]$ and $t \in [T]$, $\E{e_{t, ij}} = 0$ and $\EE \lvert e_{t, ij} \rvert^8 \le C$.

        \item Factor and noise are uncorrelated, that is, $\E{e_{t,ij} f_{s,lh}} = 0$ for any $t, s \in [T]$, $i\in[p]$, $j\in [q]$, $l\in[k]$, and $h\in[r]$.
    \end{enumerate}
\end{assumption}

\begin{assumption} \label{assume:loading_matrix}
    \textbf{Loading matrix.}
    For each row of $\bR$, $\norm{\bR_{i \cdot}} = \bigO{1}$, and, as $p,q \rightarrow \infty$, we have $\norm{p^{-1} {\bR}^\top \bR - \bOmega_R} \longrightarrow 0$ for some $k \times k$ positive definite matrix $\bOmega_R$.
    For each row of  $\bC$,  $\norm{\bC_{i \cdot}} = \bigO{1}$, and, as $p,q \rightarrow \infty$, $\norm{q^{-1} {\bC}^\top \bC - \bOmega_C} \longrightarrow 0$ for some $r \times r$ positive definite matrix $\bOmega_C$.
\end{assumption}

Assumption \ref{assume:loading_matrix} is an extension of the \textit{pervasive assumption} \citep{stock2002forecasting} to the matrix variate data.
It ensures that each row and column of the factor matrix $\bF_t$ has a nontrivial contribution to the variance of rows and columns of $\bY_t$.
Thus our analysis is in the regime of ``strong factors''
that they lead to exploding eigenvalues relative to the idiosyncratic eigenvalues.

Note that Assumption \ref{assume:alpha-mixing}  only deals with {\em temporal dependence}.
The matrix dimension $p$ and $q$ also determine the convergence rates, which is affected by the cross-row and cross-column dependence.
Thus we need Assumptions \ref{assume:cross_row_col_corr} and \ref{assume:CLT} below so that the information accumulated over rows ($p$) or columns ($q$) is also useful.
Assumption \ref{assume:cross_row_col_corr} holds automatically when the errors $\bE_t$ are i.i.d. over rows and columns for any $t$.

\begin{assumption} \label{assume:cross_row_col_corr}
    \textbf{Cross row (column) correlation of noise $\bE_t$.}
    There exists some positive constant $C < \infty$ such that,
    \begin{enumerate}[label=(\alph*)]

        \item Let  $\bU_E = \E{\frac{1}{qT} \sum_{t=1}^{T}\bE_t \bE_t^T}$ and $\bV_E = \E{\frac{1}{pT} \sum_{t=1}^{T}\bE_t^T \bE_t}$, we assume $\norm{ \bU_E }_1 \le C$ and $\norm{ \bV_E }_1 \le C$.

        \item For all row $i \in [p]$ and column $j \in [q]$ and $t \in [T]$, we assume $
        \sum_{\substack{l\in [p] \\ l\ne i}} \sum_{\substack{h\in [q] \\ h\ne j}} \abs{\E{e_{t,ij} e_{t,lh}}} \le C$.


        \item For any row $i, l \in[p]$, any time $t \in [T]$, and any column $j\in [q]$,
        \begin{equation*}
        \sum_{m\in[p]}
        \sum_{s\in[T]}
        \sum_{\substack{h\in [q] \\ h\ne j}}
        \abs{\Cov{ e_{t, ij} e_{t, lj}, \; e_{s, ih} e_{s, mh} } } \le C
        \end{equation*}
        Similar, for any column $j, h\in[q]$, any time $t \in [T]$, and any row $i \in [p]$,
        \begin{equation*}
            \sum_{m \in [q]}
            \sum_{s \in [T]}
            \sum_{\substack{l \in [p] \\ l \ne i}}
            \abs{\Cov{ e_{t, ij}e_{t, ih}, \; e_{s, lj} e_{s, lm} } } \le C
    \end{equation*}

    \end{enumerate}
\end{assumption}

To better interpret the cross-row/column correlation of noise terms in Assumption \ref{assume:cross_row_col_corr}, we consider the special case when $\bE_t$ follows an i.i.d matrix-variate normal distribution $\calM \calN_{p \times q} \left( \bzero, \tilde \bU_E, \tilde \bV_E \right)$.
Then
\[
\bU_E = \E{\frac{1}{qT} \sum_{t=1}^{T}\bE_t \bE_t^T} = \tilde \bU_E \cdot \frac{1}{q} \, \Tr{\tilde \bV_E}.
\]
Given that $ \frac{1}{q} \, \Tr{\tilde \bV_E}= \Oc{1}$, Assumption \ref{assume:cross_row_col_corr} (a) requires that the row covariance $\tilde \bU_E$ of the noise matrix satisfies $\norm{\tilde \bU_E}_{1} < c$.
Similarly,  we require $\norm{\tilde \bV_E}_{1} < c$.
It is satisfied if $\bU_E$ and $\bV_E$ are diagonal matrices, or more generally sparse matrices.
Given Assumption \ref{assume:factor-noise-matrix}, the remaining assumptions in Assumption \ref{assume:cross_row_col_corr} are satisfied if $e_{t, ij}$ are independent for all $i$, $j$, and $t$.
We allow weak correlations across $i$, $j$ or $t$ in the noise, which is more general than the i.i.d.~assumption in tensor decomposition literature \citep{zhang2018tensor}.

\begin{assumption}  \label{assume:CLT}
    There exists $m > 2$, $1 < a, b < \infty$, $1/a + 1/b = 1$, such that, for some positive $C < \infty$,
    \begin{enumerate}[label=(\alph*)]
        \item For any $l\in [k]$, $i\in[p]$, and $t\in [T]$, $\E{ \abs{\frac{1}{\sqrt{q}} \sum_{j=1}^q e_{t, i j}}^{mb} } = \bigO{1}$, $\E{ \norm{\frac{1}{\sqrt{q}} \sum_{j=1}^q \bC_{j \cdot} e_{t, i j}}^{mb} } = \bigO{1}$, and $\E{\norm{\bff_{t, l \cdot}}^{ma}} \le C$.
        \item For any $h \in [r]$, $j \in[q]$, and $t\in [T]$,
        $\E{ \abs{\frac{1}{\sqrt{p}} \sum_{i=1}^p e_{t, i j}}^{mb} } = \bigO{1}$,
        $\E{ \norm{\frac{1}{\sqrt{p}} \sum_{i=1}^p \bR_{i \cdot} e_{t, i j}}^{mb} } = \bigO{1}$,
        and $\E{\norm{\bff_{t, \cdot h}}^{ma}} \le C$.
        \item For any $t\in [T]$, $\E{\abs{\frac{1}{\sqrt{pq}} \sum_{i=1}^p \sum_{j=1}^q e_{t, i j}}^{mb} } = \bigO{1}$ and $\E{\norm{\frac{1}{\sqrt{pq}} \sum_{i=1}^p \sum_{j=1}^q \bR_{i \cdot}\bC_{j \cdot}^\top e_{t, i j}}^{mb} } = \bigO{1}$.
    \end{enumerate}
\end{assumption}

Assumption \ref{assume:CLT} is satisfied by Gaussian noise $\bE_t$ with i.i.d rows and columns.
Specifically, if $e_{t, ij} \sim \calN\paran{0, \sigma^2}$ are i.i.d.~over $i\in[p]$ and $j\in[q]$, then $\frac{1}{\sqrt{p}} \sum_{i=1}^{p} e_{t, ij} \convdist \calN\paran{0, \sigma^2}$,
$\frac{1}{\sqrt{p}} \sum_{i=1}^{p} \bR_{i \cdot} e_{t, ij} \convdist \calN\paran{ 0, \sigma^2 \bOmega_R }$,
$\frac{1}{\sqrt{pq}} \sum_{i=1}^{p} \sum_{j=1}^{q} e_{t, ij} \convdist \calN\paran{0, \sigma^2}$,
and $\frac{1}{\sqrt{pq}} \sum_{i=1}^p \sum_{j=1}^q (\bC_{j \cdot} \otimes \bR_{i \cdot}) e_{t, i j} \convdist \calN\paran{0, \sigma^2 \bOmega_C \otimes \bOmega_R}$.
Thus, Assumption \ref{assume:CLT} on the noise part is satisfied by choosing $m=2$ and $a = b = 2$.
It is imposed to guarantee the $\sqrt{pT}$ or $\sqrt{qT}$ convergence rate (rather than $\sqrt{T}$) when rows or columns of $\bE_t$ are not independent.
It will not be needed when the errors $\bE_t$ are i.i.d.~over rows and columns for any $t$ and are independent of the factor $\bF_t$,  with assumed moments conditions.
We include them here to allow for \textit{weakly cross-row (-column) and temporal correlations}.

Now, we are ready to present theoretical properties of our estimators. To facilitate the analysis, we first introduce auxiliary matrices $\bH_R$, $\bH_C$, $\bV_{R, pqT}$ and $\bV_{C, pqT}$.
As noted previously, $\bR$, $\bC$ and $\bF_t$ are not separately identifiable.
We show in the following that, for any ground truth $\bR$, $\bC$ and $\bF_t$ and our estimator $\hat \bR$ ($\hat \bC$), there exists an invertible matrix $\bH_R$ ($\bH_C$) such that $\hat \bR$ ($\hat \bC$) is a consistent estimator of $\bR \bH_R$ ($\bC \bH_C$) and $\hat \bF_t$ is a consistent estimator of $ \bH_R^{-1} \bF_t {\bH_C^{-1}}^\top$.

Let $\bV_{R, pqT} \in \RR^{k \times k}$ and $\bV_{C, pqT} \in \RR^{r \times r}$ be the diagonal matrices consisting of the first $k$ and $r$ largest eigenvalues of $\hat\bM_R=\frac{1}{p q T} \sum_{t = 1}^{T} \tilde\bY_t\tilde\bY_t^\top$ and $\hat\bM_C=\frac{1}{p q T} \sum_{t = 1}^{T} \tilde\bY_t^\top\tilde\bY_t$ in decreasing order, respectively.
By definition of our estimators $\hat\bR$ and $\hat\bC$, we have
\begin{equation*}
\hat\bR = \frac{1}{pqT} \sum_{t=1}^{T} \tilde\bY_t\tilde\bY_t^\top\hat\bR\bV_{R, pqT}^{-1} \quad \text{and} \quad \hat\bC = \frac{1}{pqT}\sum_{t=1}^{T}\tilde\bY_t^\top\tilde\bY_t \hat\bC\bV_{C, pqT}^{-1}.
\end{equation*}
Define $\bH_R \in \RR^{r \times r}$ and $\bH_C \in \mathbb{R}^{r \times r}$ as
\begin{eqnarray}
\bH_R = \frac{1}{p q T} \sum_{t=1}^{T} \tilde\bF_t \bC^\top \bC \tilde\bF_t^\top \bR^\top\hat\bR\bV_{R, pqT}^{-1} \in \mathbb{R}^{k \times k} \label{eqn:def_HR} \\
\bH_C = \frac{1}{p q T} \sum_{t=1}^{T} \tilde\bF_t^\top \bR^\top \bR \tilde\bF_t \bC^\top\hat\bC\bV_{C, pqT}^{-1} \in \mathbb{R}^{r \times r} \label{eqn:def_HC},
\end{eqnarray}
which are bounded as $p, q, T \rightarrow \infty$ (See Appendix
A 
for more details).
Theorem \ref{thm:Rhat_F_norm_convg} shows that $\hbR$ and $\hbC$ converge in Frobenius and $\ell_2$ norm.

\begin{theorem}  \label{thm:Rhat_F_norm_convg}
    Under Assumptions   \ref{assume:alpha-mixing} - \ref{assume:CLT}, we have, as $k$, $r$ fixed and $p, q, T \rightarrow \infty$,
    \[
    \frac{1}{p}\norm{\hat\bR-\bR\bH_R}^2_F = \Op{\frac{1}{\min \braces{p, qT}} }, \quad
    \frac{1}{q}\norm{\hat\bC-\bC\bH_C}^2_F = \Op{\frac{1}{\min \braces{q, pT}}}.
    \]
    Consequently,
    \[\frac{1}{p} \norm{\hat\bR-\bR\bH_R}^2= \Op{\frac{1}{\min \braces{p, qT}} }, \quad \frac{1}{q}\norm{\hat\bC - \bC\bH_C}^2= \Op{\frac{1}{\min \braces{q, pT}}}.\]
\end{theorem}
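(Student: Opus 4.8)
The plan is to turn the eigen-equation defining $\hat\bR$ into an explicit error expansion and then bound each piece. By construction $\hat\bR = \hat\bM_R\hat\bR\bV_{R,pqT}^{-1}$ with $\hat\bM_R = \frac{1}{pqT}\sum_{t=1}^{T}\tilde\bY_t\tilde\bY_t^\top$ as in \eqref{eqn:M-tildeY}. Substituting $\tilde\bY_t = \bR\tilde\bF_t\bC^\top + \tilde\bE_t$ from \eqref{eqn:proj-Y} and noting that the signal$\times$signal contribution is precisely $\bR\bH_R\bV_{R,pqT}$ by the definition \eqref{eqn:def_HR} of $\bH_R$, I would obtain
\[
\hat\bR - \bR\bH_R = \left(\bA_1 + \bA_2 + \bA_3\right)\bV_{R,pqT}^{-1},
\]
with the two signal--noise cross terms $\bA_1 = \frac{1}{pqT}\sum_t\bR\tilde\bF_t\bC^\top\tilde\bE_t^\top\hat\bR$ and $\bA_2 = \frac{1}{pqT}\sum_t\tilde\bE_t\bC\tilde\bF_t^\top\bR^\top\hat\bR$, and the noise--noise term $\bA_3 = \frac{1}{pqT}\sum_t\tilde\bE_t\tilde\bE_t^\top\hat\bR$. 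Everything then reduces to bounding $\frac1p\norm{\bA_j}_F^2$ for $j=1,2,3$.

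Before bounding the $\bA_j$ I would record two preliminary facts. First, $\bV_{R,pqT}^{-1} = \Op{1}$: writing $\hat\bM_R$ as signal plus cross plus noise, Assumptions B and C force the top $k$ eigenvalues of the signal part to converge to the $\Theta(1)$ eigenvalues of a fixed positive definite $k\times k$ matrix, while Assumption D makes the noise part $o_p(1)$ in operator norm, so Weyl's inequality traps the top $k$ eigenvalues of $\hat\bM_R$ between two positive constants. Second, $\bH_R = \Op{1}$ follows from \eqref{eqn:def_HR} using the bounded factors (Assumption B), the bounded loading rows (Assumption C), and the eigenvalue bound just obtained. Throughout I would also use the normalization $\frac1p\norm{\hat\bR}_F^2 = k$.

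The heart of the proof is the term-by-term bound, and the key device is to write $\hat\bR = \bR\bH_R + (\hat\bR - \bR\bH_R)$ inside each $\bA_j$, which separates an explicit part from a self-referential part carrying an $o_p(1)$ coefficient. For the cross terms, the explicit parts reduce to factor-weighted noise averages of the form $\frac{1}{qT}\sum_t\tilde\bF_t\bC^\top\tilde\bE_t^\top$, whose squared Frobenius norm is $\Op{p/(qT)}$ by the weak factor--noise dependence in Assumption F (with the temporal control of Assumption E); dividing by $p$ yields the $\Op{1/(qT)}$ contribution. For $\bA_3$ I would split $\frac{1}{pqT}\sum_t\tilde\bE_t\tilde\bE_t^\top$ into mean and fluctuation: the mean, once projected onto the signal space, is governed by the temporally summed bound $\sum_{s}\norm{\E{\frac{1}{pq}\bR^\top\bE_s\bE_t^\top\bR}}_2 = \Op{1}$ of Assumption E together with $\norm{\bU_E}_1 < c_2$ of Assumption D, producing the $\Op{1/p}$ contribution, while the fluctuation is handled by the fourth-moment covariance sums of Assumption D. The self-referential pieces, such as $\frac{1}{pqT}\sum_t\tilde\bE_t\tilde\bE_t^\top(\hat\bR - \bR\bH_R)$, all carry the $o_p(1)$ operator-norm factor of the noise covariance and are therefore absorbed.

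Collecting these bounds gives $\frac1p\norm{\hat\bR - \bR\bH_R}_F^2 \le C\big(\frac1p + \frac1{qT}\big) + o_p(1)\cdot\frac1p\norm{\hat\bR - \bR\bH_R}_F^2$; moving the last term to the left yields the claimed rate $\Op{1/\min\{p,qT\}}$, and the $\ell_2$ bound is immediate from $\norm{\cdot}_2 \le \norm{\cdot}_F$. The statements for $\hat\bC$ follow by the symmetric argument with $p\leftrightarrow q$, rows $\leftrightarrow$ columns, and $\hat\bM_R\leftrightarrow\hat\bM_C$. I expect the main obstacle to be the noise--noise term $\bA_3$: under only the weak cross-sectional and temporal dependence of Assumptions D and E (rather than i.i.d.\ or sub-Gaussian noise) one must show that the sample noise covariance $\frac{1}{pqT}\sum_t\tilde\bE_t\tilde\bE_t^\top$ is simultaneously negligible in operator norm (for the absorption step) and contributes at the $1/p$ level once projected onto $\bR$, all while $\hat\bR$ itself is a function of this very matrix, so the self-referential bookkeeping must be arranged to avoid circularity.
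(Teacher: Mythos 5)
Your proposal is correct, and it starts from exactly the paper's decomposition: the eigen-equation $\hat\bR = \hat\bM_R\hat\bR\bV_{R,pqT}^{-1}$, the observation that the signal-by-signal block contributes precisely $\bR\bH_R$, and the preliminary facts $\norm{\bV_{R,pqT}^{-1}}_2 = \Op{1}$, $\norm{\bH_R} = \Op{1}$ (the paper's Propositions 1 and 2). Where you diverge is in how the three error terms are bounded. The paper never performs your substitution $\hat\bR = \bR\bH_R + (\hat\bR - \bR\bH_R)$ at this stage: it simply bounds each term by Frobenius submultiplicativity, using $\norm{\hat\bR}_F^2 = pk$ from the normalization and the raw-sum bounds $\norm{\sum_t \bF_t\bC^\top\bE_t^\top}_F^2 = \Op{pqT}$ and $\norm{\sum_t\bE_t\bE_t^\top}_F^2 = \Op{p^2qT} + \Op{pq^2T^2}$ (its Lemma A.2, proved from Assumptions A, D, F via Davydov's inequality). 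This makes the proof entirely non-circular --- no absorption step is needed, which dissolves the obstacle you flag at the end. Your finer route, splitting off the self-referential pieces and closing with the inequality $X \le C(1/p + 1/(qT)) + \op{1}\,X$, is also valid: the relevant operator norms ($\Op{1/\sqrt{qT}}$ for the cross terms, $\Op{1/p} + \Op{1/\sqrt{qT}}$ for the noise covariance) are genuinely $\op{1}$ under Assumptions D and E, so the absorption is legitimate. In fact your decomposition is exactly the one the paper deploys later, row by row, in its Lemma A.3 for the asymptotic normality of Theorem 2, and it even yields a slightly sharper Frobenius bound (the mean-noise contribution becomes $\Op{1/p^2}$ rather than $\Op{1/p}$ once projected onto $\bR\bH_R$ with $\norm{\bU_E}_1 < c_2$). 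So the trade-off is: the paper's argument is cruder but shorter and avoids any self-reference; yours requires the bookkeeping you worried about but recovers (and slightly strengthens) the same rate, and anticipates the machinery needed for the distributional result. One small imprecision: the mean part of the noise-by-noise term is controlled by Assumption D.2 (the $\ell_1$ bound on $\bU_E$), not really by the temporal-sum bound of Assumption E, which enters only through the fluctuation analysis.
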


\begin{remark} \label{remark:thm1-1}
    In the vectorized model~\eqref{eqn:mfm-vec}, we denote $\bLambda = \bC\otimes\bR$.
    Applying results in \cite{bai2002determining} and \cite{bai2003inferential}, we obtain $\frac{1}{pq} \norm{\hat\bLambda-\bLambda\bH}^2= \Op{\frac{1}{\min \braces{pq, T}} }$ where $\bH\in\RR^{kr\times kr}$ is an orthonormal matrix.
    Theorem \ref{thm:Rhat_F_norm_convg} establishes faster $\ell_2$ convergence rate for both $\hat\bR$ and $\hat\bC$ in a high-dimensional regime where $p, q \ge T$.
    Furthermore, we obtain $\hat \bR$ and $\hat \bC$ directly by applying PCA to \eqref{eqn:hat-MR-aggr} and \eqref{eqn:hat-MC-aggr}, which converge faster than the PCA for
    vectorized model~\eqref{eqn:mfm-vec}.
    In addition, in order to use the tensor structure in the factor loadings, after obtaining $\hat\bLambda$ from the vectorized PCA, one needs to carry out a second step to estimate $\bR$ and $\bC$ from $\hat\bLambda$ which amounts to noisy Kronecker production decomposition.
    See \cite{cai2018rate,wedin1972perturbation,cai2019kopa} and references therein for more discussions on this topic.  Since $\hat\bLambda = (\bC\otimes\bR) \bH + \bW$, where $\bW$ is the estimation error in the first step that are dependent across entries, it is not clear how the second step aggregates biases and reduce variances.
\end{remark}

\begin{remark}
    The present paper considers only the fixed $k$ and $r$, which is common in factor analysis.
    The case with growing $k$ and $r$ can be obtained by book-keeping all the $k$ and $r$ in the proofs.
    See \cite{fan2020statistical} and Appendix B  of \cite{chen2020community} for results on growing $k$ and $r$ in the vector factor model setting.
\end{remark}

Before presenting our main theorem on the asymptotic normality, we define several quantities that are used in the theorem.
Letting $\bmu_F = \E{\bF_t}$ and
\begin{equation} \label{eqn:Sigma_FC_FR}
\bSigma_{FC} \defeq \E{\paran{\bF_t-\bmu_F} (\bC^\top \bC/q) \paran{\bF_t-\bmu_F}^\top},\quad\text{and}\quad
\bSigma_{FR} \defeq \E{\paran{\bF_t-\bmu_F}^\top (\bR^\top\bR/p) \paran{\bF_t-\bmu_F}},
\end{equation}
then
\begin{equation} \label{eqn:Sigma_FC_FR_tilde}
    \begin{aligned}
        \tilde\bSigma_{FC} \defeq
        \frac{1}{q}\,\E{\tilde\bF_t\bC^\top\bC \tilde\bF_t^\top} =  \bSigma_{FC}+\paran{\alpha+1}\frac{1}{q}\bmu_F\bC^\top\bC\bmu_F^\top, \\
        \tilde\bSigma_{FR} \defeq
        \frac{1}{p}\,\E{\tilde\bF_t^\top\bR^\top\bR\tilde\bF_t} = \bSigma_{FR}+\paran{\alpha+1}\frac{1}{p}\bmu_F^\top\bR^\top\bR\bmu_F.
    \end{aligned}
\end{equation}

Consider again the special case where $\bF_t \sim \calM \calN(\bmu_F, \bU_F, \bV_F)$. Then,  $\bF_t \bC^\top \sim \calM \calN(\bmu_F\bC^\top, \bU_F, \bC \bV_F \bC^\top)$, $\bR \bF_t \sim \calM \calN(\bR\bmu_F, \bR \bU_F \bR^\top, \bV_F)$, and
\begin{equation*}
\begin{split}
\bSigma_{FC} = \bU_F \cdot \Tr{ \bV_F \frac{\bC^\top \bC}{q}}, \quad \tilde\bSigma_{FC} = \bU_F \cdot \Tr{ \bV_F \frac{\bC^\top \bC}{q}} + \paran{\alpha+1}\frac{1}{p}\bmu_F\bR^\top\bR\bmu_F^\top. \\
\bSigma_{FR} = \bV_F \cdot \Tr{\bU_F \frac{\bR^\top \bR}{p}}, \quad
\tilde\bSigma_{FR} = \bV_F \cdot \Tr{\bU_F \frac{\bR^\top \bR}{p}} + \paran{\alpha+1}\frac{1}{p}\bmu_F\bR^\top\bR\bmu_F.
\end{split}
\end{equation*}
Matrix $\bSigma_{FC}$ can be interpreted as the row covariance of $\bF_t$ scaled by the strengths of column variances of $\bF_t \bC^\top$ and $\bSigma_{FR}$ can be interpreted as the column covariance of $\bF_t$ scaled by the strengths of row variances of $\bR \bF_t^\top$.
Matrices $\bSigma_{FC}$ and $\bSigma_{FR}$ contain the aggregated information of moments of rows of $\bF\bC^\top$ and $\bF^\top\bR$, respectively.

Theorem \ref{thm:asymp normality of R/C} establishes that $\hat \bR$ and $\hat \bC$ are good estimators of $\bR \bH_R$ and $\bC \bH_C$, respectively, and each row of $\hat \bR - \bR \bH_R$ and $\hat \bC - \bC \bH_C$ are asymptotically normal.
The following assumption on eigenvalues is needed.

\begin{assumption} \label{assume:eigen-val}
    The eigenvalues of the $k \times k$ matrix $\bOmega_R \tilde\bSigma_{FC}$ are distinct and so are
    the eigenvalues of the $r \times r$ matrix $\bOmega_C \tilde\bSigma_{FR}$.
\end{assumption}

\begin{theorem} \label{thm:asymp normality of R/C}
    Under Assumptions \ref{assume:alpha-mixing}-\ref{assume:eigen-val}, as $k$, $r$ fixed and $p, q, T \rightarrow \infty$, we have:
    \begin{enumerate}[label=(\roman*)]
        \item For \textbf{row loading matrix $\bR$}, if $\sqrt{q T} / p \rightarrow 0$, then
        \begin{equation*}
        \sqrt{qT} \left( \hat\bR_{i \cdot} - \bH_R^\top \bR_{i \cdot} \right) =\bV_{R, pqT}^{-1}\cdot\frac{\hat \bR^\top \bR }{p}\cdot\frac{1}{\sqrt{qT}} \sum_{t=1}^{T} \tilde\bF_t\bC^\top\tilde\bE_{t, i \cdot} + \op{1}
        \convdist \calN\paran{\bzero, \bSigma_{R_i}},
        \end{equation*}
        where
        \begin{equation} \label{eqn:R-asym-cov}
        \bSigma_{R_i} \defeq \bV_R^{-1}\bQ_R
        \paran{\bPhi_{R,i,11}+\alpha\bPhi_{R,i,12}\bmu_F^\top+\alpha\bmu_F\bPhi_{R,i,21}+\alpha^2\bmu_F\bPhi_{R,i,22}\bmu_F^\top}
        \bQ_R^\top\bV_R^{-1},
        \end{equation}
        and
        \begin{equation}
            \begin{aligned}
                \bPhi_{R, i, 11} & = \underset{q, T \rightarrow \infty}{{\rm p lim}} \frac{1}{qT}  \sum_{t=1}^{T} \sum_{s=1}^{T} \E{\bF_t \bC^\top \be_{t, i \cdot} \be_{s, i \cdot}^\top \bC \bF_s^\top}, \\
                \bPhi_{R, i, 12} & = \bPhi_{R, i, 21}^\top = \underset{q, T \rightarrow \infty}{{\rm p lim}} \frac{1}{qT}  \sum_{t=1}^{T} \sum_{s=1}^{T} \E{\bF_t \bC^\top \be_{t, i \cdot} \be_{s, i \cdot}^\top \bC}, \\
                \bPhi_{R, i, 22} & = \underset{q, T \rightarrow \infty}{{\rm p lim}} \frac{1}{qT}  \sum_{t=1}^{T} \sum_{s=1}^{T} \E{\bC^\top \be_{t, i \cdot} \be_{s, i \cdot}^\top \bC}.
            \end{aligned}
        \end{equation}
        Matrix $\bQ_R \defeq \bV_R^{1/2}\;\bPsi_R^\top\;\tilde\bSigma_{FC}^{- 1/2}$
        where $\bV_R$ is a diagonal matrix whose entries are the eigenvalues of $\tilde\bSigma_{FC}^{1/2}\;\bOmega_{R}\;\tilde\bSigma_{FC}^{1/2}$ in decreasing order,  $\bPsi_R$ is the corresponding eigenvector matrix such that $\bPsi_R^\top \bPsi_R = \bI$, $\bOmega_{R}$ defined in Assumption \ref{assume:loading_matrix} and $\tilde\bSigma_{FC}$ is defined in \eqref{eqn:Sigma_FC_FR_tilde}.

        \item For \textbf{column loading matrix $\bC$}, if $\sqrt{p T} / q \rightarrow 0$, then
        \begin{equation*}
        \sqrt{pT}\paran{\hat\bC_{j\cdot}-\bH_C^\top\bC_{j \cdot}}
        =\bV_{C, pqT}^{-1} \frac{\hat \bC^\top \bC }{q} \frac{1}{\sqrt{pT}} \sum_{t=1}^{T} \bF_t^\top \bR^\top \bE_{t, \cdot j } + \op{1}
        \convdist \calN\paran{\bzero, \bSigma_{C_j}},
        \end{equation*}
        where
        \begin{equation} \label{eqn:C-asym-cov}
        \bSigma_{C_j} \defeq \bV_C^{-1}\bQ_C
        \paran{\bPhi_{C,j,11}+\alpha\bPhi_{C,j,12}\bmu_F+\alpha\bmu_F^\top\bPhi_{C,j,21}+\alpha^2\bmu_F^\top\bPhi_{C,j,22}\bmu_F}
        \bQ_C^\top \bV_C^{-1},
        \end{equation}
        and $ \bPhi_{C,j,11}$, $\bPhi_{C,j,12}$ and  $ \bPhi_{C,j,22}$ are defined similarly to $ \bPhi_{R,i,11}$, $\bPhi_{R,i,12}$ and  $ \bPhi_{R,i,22}$.
        Matrix $\bQ_C \defeq \bV_C^{1/2}\;\bPsi_C^\top\;\tilde\bSigma_{FR}^{- 1/2}$
        where
        $\bV_C$ is a diagonal matrix whose entries are the eigenvalues of $\tilde\bSigma_{FR}^{1/2}\;\bOmega_{C}\;\tilde\bSigma_{FR}^{1/2}$ in decreasing order,  $\bPsi_C$ is the corresponding eigenvector matrix such that $\bPsi_C^\top\bPsi_C = \bI$, $\bOmega_{C}$ is defined in Assumption \ref{assume:loading_matrix}, and $\tilde\bSigma_{FR}$ is defined in \eqref{eqn:Sigma_FC_FR_tilde}.
    \end{enumerate}
\end{theorem}

Note that the asymptotic variance depends on $\alpha$ in a quadratic form and its minimum typically exists.
In particular, if $\bPhi_{R,i,12}=0$ and $\bPhi_{C,i,12}=0$,
the linear term is zero and hence $\alpha_{opt} = 0$.  In this case, $\alpha$-PCA outperforms the convention 2D-PCA, which takes $\alpha = -1$.

\begin{remark} \label{remark:thm2}
    {\em (Optimal $\alpha$ based on different criteria.)}
    Scalar $\alpha$ is a hyper-parameter used in the estimation to balance the information of the first and second moments.
    When $\alpha=-1$, $\alpha$-PCA uses only the second moment and reduces to the $2D$-PCA algorithm.
    Theorems 1 and 2 show that the convergence rates of $\hat\bR_{i\cdot}$ and $\hat\bC_{j\cdot}$ are not affected by $\alpha$.
    However, the asymptotic variances in \eqref{eqn:R-asym-cov} and \eqref{eqn:C-asym-cov} are dependent on the value of $\alpha$.
    Thus, the asymptotic variances of $\hat\bR_{i\cdot}$ and $\hat\bC_{j\cdot}$ can be used as a criterion to find the optimal $\alpha$.

    When $\bmu_f=\bzero$, \eqref{eqn:R-asym-cov} and \eqref{eqn:C-asym-cov}  show that the value of $\alpha$ does not affect the asymptotic variance.
    Indeed, in this case, the first moments do not provide any extra information.
    When $\bmu_f\ne\bzero$, one criterion is to minimize $p^{-1}\sum_{i=1}^p \Tr{\bSigma_{R_i}}$, which controls the asymptotic variance in an average sense.
    We can obtain an analytical form of $\alpha_{opt}$ as
    \begin{equation} \label{eqn:rough-alpha}
    \alpha_{opt} = - \frac{1}{2} \Tr{ \bmu_F^\top\bPhi_{R,22}\bmu_F }^{-1} \Tr{ \bPhi_{R,12}\bmu_F+\bmu_F^\top\bPhi_{R,21} },
    \end{equation}
    where $\bPhi_{R,kl} = p^{-1}\sum_{i=1}^p\bPhi_{R,i,kl}$ for $k,l=1,2$.
    If  $\bPhi_{R,21} = \bPhi_{R,12} =0$, then $\alpha_{opt} = 0$ for the criterion of minimizing $p^{-1}\sum_{i=1}^p \Tr{\bSigma_{R_i}}$.
    In this case, aggregation indeed gains, putting equal weights on both the first and the second moments.
    The simulation in Section \ref{sec:simul-alpha} confirms this theoretical result.

    For other criterion based on asymptotic variances such as $\underset{i\in[p]}{\max}\paran{\bSigma_{R_i}}$, an analytical form of $\alpha$ does not exist.
    However, we are still able to use computational methods to search for the optimal $\alpha$ that minimize the criterion as a function of $\bSigma_{R_i}$ and $\bSigma_{C_j}$ based on \eqref{eqn:R-asym-cov} and \eqref{eqn:C-asym-cov}.
\end{remark}

\begin{remark}
    {\em (Practical guidance for choosing $\alpha$.)}
    As discussed above, the optimal choice of $\alpha$ can be chosen according to \eqref{eqn:rough-alpha} for the purpose of minimizing the asymptotic variance.
    If one decides to seek for a better choice, one can  search $\alpha$ over a grid of points for the one that optimizes an application-specific criterion.
    For example, in Section \ref{sec:application_macro_indices} with multinational macroeconomic indices, we would like the variance of estimators to be minimal.
    So we find optimal $\alpha$ as one that minimizes the trace $\Tr{\hat\bSigma_R}$ where $\hat\bSigma_R= p^{-1}\sum_{i=1}^p \hat\bSigma_{R_i}$.
    This value can be calculated according to equation \eqref{eqn:cov-est} for a grid of $\alpha$'s, as plotted in Figure \ref{fig:oecd-traces-alpha}.
    Alternatively, in Section \ref{sec:application_image} with image data set, we care most about the reconstruction error which is measured by the ratio between residual sum of squares over the total sum of squares (RSS/TSS).
    So we search the optimal $\alpha$ that minimize the RSS/TSS over a grid of $\alpha$s, as plotted in Figure \ref{fig:appl-alpha}.
\end{remark}

\begin{theorem} \label{thm:asymp_Ft}
    Under Assumptions \ref{assume:alpha-mixing}-\ref{assume:eigen-val}, as $k$, $r$ fixed and $p, q, T \rightarrow \infty$, we have 
    \[
    \hat \bF_t - \bH_R^{-1} \bF_t {\bH_C^{-1}}^\top = \Op{\frac{1}{\min(p, q)}}.
    \]
\end{theorem}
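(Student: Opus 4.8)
The plan is to substitute the model $\bY_t = \bR\bF_t\bC^\top + \bE_t$ directly into the plug-in estimator $\hat\bF_t = \frac{1}{pq}\hat\bR^\top\bY_t\hat\bC$ of \eqref{eqn:F_t_est}, which gives
\begin{equation*}
\hat\bF_t = \bA\,\bF_t\,\bB^\top + \frac{1}{pq}\hat\bR^\top\bE_t\hat\bC,\qquad \bA\defeq\frac{1}{p}\hat\bR^\top\bR,\quad \bB\defeq\frac{1}{q}\hat\bC^\top\bC.
\end{equation*}
Since $\bH_R,\bH_C$ and their inverses are $\Op{1}$ (boundedness noted after \eqref{eqn:def_HC}) and $\norm{\bF_t}=\Op{1}$ by Assumption \ref{assume:factor_matrix}, the theorem reduces to three claims: (i) $\bA=\bH_R^{-1}+\Op{1/p}$; (ii) $\bB=\bH_C^{-1}+\Op{1/q}$; and (iii) $\frac{1}{pq}\hat\bR^\top\bE_t\hat\bC=\Op{1/\sqrt{pq}}$. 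Writing $\bA=\bH_R^{-1}+\bdelta_A$ and $\bB=\bH_C^{-1}+\bdelta_B$ and expanding $\bA\bF_t\bB^\top$, these give $\hat\bF_t-\bH_R^{-1}\bF_t{\bH_C^{-1}}^\top=\bdelta_A\bF_t{\bH_C^{-1}}^\top+\bH_R^{-1}\bF_t\bdelta_B^\top+\bdelta_A\bF_t\bdelta_B^\top+\Op{1/\sqrt{pq}}$, of order $\Op{1/p+1/q+1/\sqrt{pq}}=\Op{1/\min(p,q)}$, as claimed.

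The crux is claim (i), and I expect it to be the main obstacle. The naive route, using $\frac1p\hat\bR^\top\hat\bR=\bI$ together with Theorem \ref{thm:Rhat_F_norm_convg} and Cauchy--Schwarz, only yields $\bA-\bH_R^{-1}=\Op{1/\sqrt{\min\{p,qT\}}}=\Op{1/\sqrt p}$, which is too weak. To obtain the sharp $\Op{1/p}$ I would exploit the structure of the residual of $\hat\bM_R$. Writing $\hat\bM_R=\bR\bW_R\bR^\top+\bG_R$ with $\bW_R=\frac{1}{pqT}\sum_t\tilde\bF_t\bC^\top\bC\tilde\bF_t^\top$ and $\bG_R=\frac{1}{pqT}\sum_t\paran{\bR\tilde\bF_t\bC^\top\tilde\bE_t^\top+\tilde\bE_t\bC\tilde\bF_t^\top\bR^\top+\tilde\bE_t\tilde\bE_t^\top}$, the eigen-equation $\hat\bM_R\hat\bR=\hat\bR\bV_{R,pqT}$ and the definition \eqref{eqn:def_HR} yield the identity $\hat\bR-\bR\bH_R=\bG_R\hat\bR\bV_{R,pqT}^{-1}$, so that
\begin{equation*}
\bI-\bA\bH_R=\frac1p\hat\bR^\top(\hat\bR-\bR\bH_R)=\frac1p\hat\bR^\top\bG_R\hat\bR\,\bV_{R,pqT}^{-1}.
\end{equation*}
With $\bV_{R,pqT}^{-1}=\Op{1}$ (the top eigenvalues of $\hat\bM_R$ are $\Theta(1)$ after the $1/pq$ scaling, as established for Theorem \ref{thm:Rhat_F_norm_convg}), it remains to bound the quadratic form $\frac1p\hat\bR^\top\bG_R\hat\bR$. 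Replacing $\hat\bR$ by $\bR\bH_R$ to leading order, the signal--noise cross terms contribute $\frac{\bR^\top\bR}{p}\cdot\frac{1}{pqT}\sum_t\tilde\bF_t\bC^\top\tilde\bE_t^\top\bR=\Op{1/\sqrt{pqT}}$ by Assumption \ref{assume:moments_CLT}.2, while the noise--noise term gives $\frac1p\cdot\frac{1}{pqT}\sum_t\bR^\top\tilde\bE_t\tilde\bE_t^\top\bR=\frac1p\Op{1}=\Op{1/p}$ by Assumption \ref{assume:temporal_dependence}.1 (the $\tilde\alpha$ mean-corrections are absorbed by the $\bar\bE,\bar\bF$ parts of Assumptions \ref{assume:temporal_dependence}--\ref{assume:moments_CLT}). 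Hence $\bdelta_A=-\paran{\frac1p\hat\bR^\top\bG_R\hat\bR}\bV_{R,pqT}^{-1}\bH_R^{-1}=\Op{1/\sqrt{pqT}+1/p}=\Op{1/p}$, the replacement error incurred by $\hat\bR-\bR\bH_R$ being of smaller order by Theorem \ref{thm:Rhat_F_norm_convg}.

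Claim (ii) is symmetric, exchanging rows and columns and invoking Assumptions \ref{assume:temporal_dependence}.2 and \ref{assume:moments_CLT}.2 together with $\hat\bM_C$. For claim (iii) I would substitute $\hat\bR=\bR\bH_R+(\hat\bR-\bR\bH_R)$ and $\hat\bC=\bC\bH_C+(\hat\bC-\bC\bH_C)$: the leading term $\bH_R^\top\paran{\frac{1}{pq}\bR^\top\bE_t\bC}\bH_C$ is $\Op{1/\sqrt{pq}}$ by Assumption \ref{assume:moments_CLT}.3, and the cross terms carrying the estimation errors are of smaller order via Theorem \ref{thm:Rhat_F_norm_convg} and the moment bounds. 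Combining (i)--(iii) in the decomposition of the first paragraph then delivers the stated rate $\Op{1/\min(p,q)}$.
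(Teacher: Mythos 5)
Your proposal is correct and follows essentially the same route as the paper: the paper's own proof uses your exact decomposition $\hat\bF_t-\bH_R^{-1}\bF_t{\bH_C^{-1}}^\top=\bdelta_A\bF_t{\bH_C^{-1}}^\top+\bH_R^{-1}\bF_t\bdelta_B^\top+\bdelta_A\bF_t\bdelta_B^\top+\frac{1}{pq}\hat\bR^\top\bE_t\hat\bC$, and its Lemmas \ref{lemma:F_term_1}, \ref{lemma:F_term_2} and \ref{lemma:E_term} establish precisely your claims (i)--(iii) by the same mechanism you describe, namely feeding the eigen-identity $\hat\bR-\bR\bH_R=\bG_R\hat\bR\bV_{R,pqT}^{-1}$ into a row-averaged quantity (the paper bounds $\frac{1}{p}(\hat\bR-\bR\bH_R)^\top\bR$ where you bound $\frac{1}{p}\hat\bR^\top(\hat\bR-\bR\bH_R)$, an immaterial difference) so that the extra $1/p$ averaging upgrades the naive Cauchy--Schwarz rate $\Op{1/\sqrt{\min\{p,qT\}}}$. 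One caveat: your intermediate rates are mildly overstated---the fluctuation of the noise-noise quadratic form makes $\bdelta_A=\Op{1/\min(p,qT)}$ rather than $\Op{1/p}$, and in (iii) the estimation-error cross terms are $\Op{1/\min(p,q)}$ rather than of smaller order than $1/\sqrt{pq}$---but these weaker rates are exactly what the paper's lemmas deliver and still yield the stated $\Op{1/\min(p,q)}$.
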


\begin{theorem} \label{thm:asymp_St}
    Under Assumptions \ref{assume:alpha-mixing}-\ref{assume:eigen-val}, as $k$, $r$ fixed and $p, q, T \rightarrow \infty$, we have  the following convergence result of the estimator \eqref{eqn:S_t_est} of the signal part $\bS_t = \bR \bF_t \bC^\top$.
    \[
    \hat \bS_{t,ij} - \bS_{t,ij} =  \Op{\frac{1}{\min \paran{p, q, \sqrt{pT}, \sqrt{qT}}}},  \quad\text{for any } 1 \le i \le p \text{ and } 1 \le j \le q.
    \]
\end{theorem}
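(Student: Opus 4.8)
The plan is to write the entrywise estimator as a trilinear form and exploit the cancellation of the transformation matrices supplied by the earlier theorems. Since $\hat\bF_t = \frac{1}{pq}\hat\bR^\top\bY_t\hat\bC$, the estimator \eqref{eqn:S_t_est} satisfies $\hat\bS_t = \hat\bR\hat\bF_t\hat\bC^\top$, so, writing the $i$-th row of a loading matrix as a column vector (matching the convention $\bH_R^\top\bR_{i\cdot}$ of Theorem \ref{thm:asymp normality of R/C}), we have $\hat\bS_{t,ij} = \hat\bR_{i\cdot}^\top\hat\bF_t\hat\bC_{j\cdot}$ and likewise $\bS_{t,ij} = \bR_{i\cdot}^\top\bF_t\bC_{j\cdot}$. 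The key algebraic observation is that the transformations introduced by Theorems \ref{thm:asymp normality of R/C} and \ref{thm:asymp_Ft} cancel: because $\bH_R\bH_R^{-1}=\bI$ and ${\bH_C^{-1}}^\top\bH_C^\top=\bI$,
\[
\bS_{t,ij} = \paran{\bH_R^\top\bR_{i\cdot}}^\top\paran{\bH_R^{-1}\bF_t{\bH_C^{-1}}^\top}\paran{\bH_C^\top\bC_{j\cdot}},
\]
so that $\bS_{t,ij}$ is the \emph{same} trilinear form evaluated at exactly the targets that $\hat\bR_{i\cdot}$, $\hat\bF_t$, and $\hat\bC_{j\cdot}$ consistently estimate.

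Writing $u=\hat\bR_{i\cdot}$, $u_0 = \bH_R^\top\bR_{i\cdot}$, $F=\hat\bF_t$, $F_0 = \bH_R^{-1}\bF_t{\bH_C^{-1}}^\top$, $v=\hat\bC_{j\cdot}$, and $v_0 = \bH_C^\top\bC_{j\cdot}$, I would use the exact telescoping identity
\[
u^\top F v - u_0^\top F_0 v_0 = (u-u_0)^\top F v + u_0^\top(F-F_0)v + u_0^\top F_0 (v-v_0),
\]
and bound the three terms separately. For the first term, Theorem \ref{thm:asymp normality of R/C} applied rowwise gives $\norm{\hat\bR_{i\cdot}-\bH_R^\top\bR_{i\cdot}}=\Op{1/\sqrt{qT}}$, while $F=\hat\bF_t$ and $v=\hat\bC_{j\cdot}$ are $\Op{1}$, so this term is $\Op{1/\sqrt{qT}}$. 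Symmetrically, the columnwise part of Theorem \ref{thm:asymp normality of R/C} makes the third term $\Op{1/\sqrt{pT}}$. For the middle term, Theorem \ref{thm:asymp_Ft} gives $\norm{\hat\bF_t-\bH_R^{-1}\bF_t{\bH_C^{-1}}^\top}=\Op{1/\min(p,q)}$, and $u_0,v$ are $\Op{1}$, so it is $\Op{1/\min(p,q)}$. Adding, $\hat\bS_{t,ij}-\bS_{t,ij} = \Op{1/\sqrt{qT}+1/\sqrt{pT}+1/\min(p,q)} = \Op{1/\min(p,q,\sqrt{pT},\sqrt{qT})}$, as claimed.

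The routine part is the boundedness bookkeeping: $\bH_R,\bH_C$ are bounded with bounded inverses (as recorded after \eqref{eqn:def_HC}), $\norm{\bR_{i\cdot}},\norm{\bC_{j\cdot}}\le c$ by Assumption \ref{assume:loading_matrix}, and $\bF_t=\Op{1}$ by Assumption \ref{assume:factor_matrix}; combined with the consistency of $\hat\bF_t$, $\hat\bR_{i\cdot}$, and $\hat\bC_{j\cdot}$, this yields $F,v,u_0,F_0=\Op{1}$. The only genuine obstacle is obtaining the \emph{entrywise} (rather than averaged Frobenius) rates for the loadings, since Theorem \ref{thm:Rhat_F_norm_convg} controls only $\frac1p\norm{\hat\bR-\bR\bH_R}_F^2$; I would therefore invoke the rowwise and columnwise expansions established while proving Theorem \ref{thm:asymp normality of R/C}, which deliver the per-index rates $1/\sqrt{qT}$ and $1/\sqrt{pT}$ directly, with any residual $1/p$ or $1/q$ contribution harmlessly absorbed into $1/\min(p,q)$. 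With those pointwise bounds in hand, the remainder is purely the triangle inequality applied to the telescoping identity above.
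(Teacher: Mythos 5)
Your proposal is correct and takes essentially the same route as the paper: the paper's proof likewise observes that $\bS_{t,ij}$ equals the trilinear form evaluated at $\bH_R^\top\bR_{i\cdot}$, $\bH_R^{-1}\bF_t{\bH_C^{-1}}^\top$, $\bH_C^\top\bC_{j\cdot}$, expands the difference $\hat\bR_{i\cdot}^\top\hat\bF_t\hat\bC_{j\cdot}-\bS_{t,ij}$ (into seven cross terms rather than your three-term telescope, an immaterial bookkeeping difference), and then invokes the rowwise and columnwise rates $\Op{1/\min\paran{p,\sqrt{qT}}}$ and $\Op{1/\min\paran{q,\sqrt{pT}}}$ from Theorem \ref{thm:asymp normality of R/C} together with $\Op{1/\min\paran{p,q}}$ from Theorem \ref{thm:asymp_Ft}. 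Your closing remark about the residual $1/p$ and $1/q$ contributions being absorbed into $1/\min\paran{p,q}$ is exactly how the paper's cited rates combine to give the stated bound.
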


\begin{remark}
Theorems \ref{thm:asymp_Ft} does not require any restriction on the relationship between $p$, $q$ and $T$ except that they all go to infinity.
Theorems \ref{thm:asymp_Ft} and \ref{thm:asymp_St} show that, in order to estimate the latent factor $\bF_t$ and signal $\bS_t$ consistently, we need to have dimensions $p$ and $q$ approach infinity. An explanation is that we need to have sufficient information to distinguish the signal $\bR \bF_t \bC^\top$ from the noise $\bE_t$ at each time point $t$.
Theorems \ref{thm:asymp normality of R/C}, \ref{thm:asymp_Ft} and \ref{thm:asymp_St} present the asymptotic properties when the dimension of the latent matrix factor $k \times r$ is assumed to be known.
\end{remark}


\section{Estimating Covariance Matrices} \label{sec:cov-est}

In this section, we derive consistent estimators of the asymptotic variance-covariance matrices.
According to Theorem \ref{thm:asymp normality of R/C}, the asymptotic covariance of $\hat \bR_{i \cdot}$, $1 \le i \le p$, is given by
\begin{equation} \label{eqn:R-finite-cov}
\bSigma_{R_i}
= \bV_{R, pqT}^{-1} \bQ_R
\begin{pmatrix} \bI_k & \alpha\bmu_F \end{pmatrix}
\begin{pmatrix}
\bPhi_{R, i, 11} & \bPhi_{R, i, 12} \\
\bPhi_{R, i, 21} & \bPhi_{R, i, 22}
\end{pmatrix}
\begin{pmatrix} \bI_k \\ \alpha\bmu_F^\top \end{pmatrix}
\bQ_R^\top \bV_{R,pqT}^{-1}.
\end{equation}
Term $\bV_{R, pqT}$ is estimated as the $k \times k$ diagonal matrix of the first $k$ largest eigenvalues of $\frac{1}{p q T} \sum_{t = 1}^{T}\tilde\bY_t \tilde\bY_t^\top$ in decreasing order.
To estimate the middle term sandwiched by $\bV_{R, pqT}^{-1}$, we use the heteroskedasticity and autocorrelation consistent (HAC) estimators \citep{newey1987simple} based on series $\braces{\hat\bF_t, \; \hat\bC^\top, \; \hat\be_{t, i \cdot}}_{t\in[T]}$ where $\hat\bF_t$ and $\hat\bC$ are estimated in Section \ref{sec:estimation} and $\hat\bE_t=\bY_t-\hat\bR \hat\bF_t \hat\bC^\top$.
Specifically, for a tuning parameter $m$ that satisfies
and $m \to \infty$ and  $m / \paran{qT}^{1/4} \longrightarrow 0$, it is defined as
\[
\bD_{R, 0,i} + \sum_{\nu = 1}^{m} \paran{1 - \frac{\nu}{1 + m}} \paran{\bD_{R, \nu, i} + \bD_{R, \nu, i}^\top},
\]
where
\[
\bD_{R,\nu,i}=
\begin{pmatrix} \bI_k & \alpha\bar{\hat\bF} \end{pmatrix}
\begin{pmatrix}
\frac{1}{qT}\sum_{t=1+\nu}^{T} \hat \bF_t \hat\bC^\top \hat\be_{t, i \cdot} \hat \be_{t-\nu, i \cdot}^\top \hat \bC \hat \bF_{t-\nu}^\top & \frac{1}{qT} \sum_{t=1+\nu}^{T} \hat \bF_t \hat\bC^\top \hat\be_{t, i \cdot} \hat\be_{t-\nu,i\cdot}^\top \hat\bC \\
\frac{1}{qT}\sum_{t=1+\nu}^{T} \hat\bC^\top\hat\be_{t,i\cdot}\hat\be_{t-\nu, i\cdot}^\top\hat\bC\hat \bF_{t-\nu}^\top &
\frac{1}{qT}\sum_{t=1+\nu}^{T}\hat\bC^\top\hat\be_{t,i\cdot}\hat\be_{t-\nu,i \cdot}^\top\hat\bC
\end{pmatrix}
\begin{pmatrix} \bI_k \\ \alpha\bar{\hat\bF}^\top \end{pmatrix},
\]
and $\bar{\hat\bF}=\frac{1}{T}\sum_{t=1}^{T}\hat\bF_t$ is the estimated mean.
While a HAC estimator based on true $\braces{\bF_t, \; \bC^\top, \; \be_{t, i \cdot}}_{t\in[T]}$, a HAC estimator based on $\braces{\hat\bF_t, \; \hat\bC^\top, \; \hat\be_{t, i \cdot}}_{t\in[T]}$ is estimating $\bQ_R \bPhi_{R, i} \bQ_R^\top$ because $\hat \bF_t$ estimates $\bH_R^{-1} \bF_t {\bH_C^\top}^{-1}$, $\hat \bC$ estimates $\bC \bH_C$ and $\bar{\hat\bF}$ estimates $\bH_R^{-1} \bmu_F {\bH_C^\top}^{-1}$.
Thus, a HAC estimator of the covariance of $\bSigma_{R_i}$ is given by
\begin{equation}
\label{eqn:cov-est}
\hat \bSigma_{R_i} = \bV_{pqT, R}^{-1} \left( \bD_{R, 0,i} + \sum_{\nu = 1}^{m} \paran{1 - \frac{\nu}{1 + m}} \paran{\bD_{R, \nu, i} + \bD_{R, \nu, i}^\top} \right) \bV_{pqT, R}^{-1}
\end{equation}
Similar for $\hat \bC_{j \cdot}$, $1 \le j \le q$,  a HAC estimator of the covariance is given by
\[
\hat \bSigma_{C_j} = \bV_{pqT, C}^{-1}  \left( \bD_{C, 0,j} + \sum_{\nu = 1}^{m} \paran{1 - \frac{\nu}{1 + m}} \paran{\bD_{C, \nu, j} + \bD_{C, \nu, j}^\top} \right) \bV_{pqT, C}^{-1},
\]
where
\[
\bD_{C, \nu, j} =
\begin{pmatrix} \bI_r & \alpha\bar{\hat\bF}^\top \end{pmatrix}
\begin{pmatrix}
\frac{1}{pT}\sum_{t=1+\nu}^{T} \hat\bF_t^\top \hat\bR^\top \hat\be_{t,\cdot j} \hat \be_{t-\nu,\cdot j}^\top \hat\bR\hat\bF_{t-\nu} & \frac{1}{pT} \sum_{t=1+\nu}^{T} \hat\bF_t^\top \hat\bR^\top \hat\be_{t,\cdot j} \hat\be_{t-\nu,\cdot j}^\top \hat\bR \\
\frac{1}{pT}\sum_{t=1+\nu}^{T} \hat\bR^\top\hat\be_{t,\cdot j}\hat\be_{t-\nu,\cdot j}^\top\hat\bR\hat \bF_{t-\nu} &
\frac{1}{pT}\sum_{t=1+\nu}^{T}\hat\bR^\top\hat\be_{t,\cdot j}\hat\be_{t-\nu,\cdot j}^\top\hat\bR
\end{pmatrix}
\begin{pmatrix} \bI_r \\ \alpha\bar{\hat\bF} \end{pmatrix},
\]
and $\bar{\hat\bF}=\frac{1}{T}\sum_{t=1}^{T}\hat\bF_t$ is the estimated mean.
The following theorem confirms the consistency.

\begin{theorem} \label{thm:cov-est-consistent}
    Under Assumptions \ref{assume:alpha-mixing}-\ref{assume:eigen-val},  as $k$, $r$ fixed and $p, q, T \rightarrow \infty$, $\hat \bSigma_{R_i}$ and $\hat \bSigma_{C_j}$ are consistent for $\bSigma_{R_i}$ and $\bSigma_{C_j}$, respectively.
\end{theorem}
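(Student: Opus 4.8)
The plan is to adapt the classical consistency argument for heteroskedasticity-and-autocorrelation-consistent (HAC) covariance estimators \citep{newey1987simple} to the present latent factor setting, where the series $\{\bF_t,\bC^\top,\be_{t,i\cdot}\}$ entering $\bSigma_{R_i}$ are unobserved and must be replaced by the plug-in quantities $\{\hat\bF_t,\hat\bC^\top,\hat\be_{t,i\cdot}\}$. I treat $\hat\bSigma_{R_i}$ in detail; the argument for $\hat\bSigma_{C_j}$ is identical after interchanging the roles of rows and columns and of $p$ and $q$. I would decompose the total error $\hat\bSigma_{R_i}-\bSigma_{R_i}$ into three contributions: (i) the error from the outer normalization $\bV_{pqT,R}^{-1}$ relative to its limit $\bV_R^{-1}$; (ii) the sampling error of the \emph{infeasible} HAC estimator, namely the middle sandwich of \eqref{eqn:cov-est} evaluated at the true $\{\bF_t,\bC,\be_{t,i\cdot}\}$, relative to the population target $\bQ_R\bPhi_{R,i}\bQ_R^\top$; and (iii) the \emph{plug-in error} incurred by substituting the estimated series for the truth. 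Piece (i) follows immediately: the eigenvalue perturbation bounds established in the proof of Theorem \ref{thm:Rhat_F_norm_convg} give $\bV_{pqT,R}\convprob\bV_R$ with $\bV_R$ bounded away from zero, so $\bV_{pqT,R}^{-1}$ is consistent and $\Op{1}$, after which Slutsky's lemma reduces the problem to the middle sandwich.

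For piece (ii), I would invoke the standard bias-variance decomposition of the Bartlett-kernel estimator. Under the $\alpha$-mixing condition of Assumption \ref{assume:alpha-mixing} together with the moment bounds of Assumptions \ref{assume:factor_matrix} and \ref{assume:cross_row_col_corr}, the population autocovariances of the stationary array $\{(\bF_t\bC^\top\be_{t,i\cdot},\,\bC^\top\be_{t,i\cdot})\}_t$ are absolutely summable, so the weights $1-\nu/(1+m)$ with $m\to\infty$ drive the truncation bias to zero, while the variance of the weighted sum is of order $m/(qT)$, which vanishes under the bandwidth condition $m/(qT)^{1/4}\to0$. The block structure and the outer sandwich by $(\bI_k,\alpha\bar\bF)$ reproduce exactly the linear combination $\bPhi_{R,i,11}+\alpha\bPhi_{R,i,12}\bmu_F^\top+\alpha\bmu_F\bPhi_{R,i,21}+\alpha^2\bmu_F\bPhi_{R,i,22}\bmu_F^\top$ appearing in \eqref{eqn:R-asym-cov}, since $\bar\bF\convprob\bmu_F$ by Assumption \ref{assume:factor_matrix}. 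The limiting identities between $\bH_R$, $\bV_R$, $\bQ_R$, and $\tilde\bSigma_{FC}$ worked out in the proof of Theorem \ref{thm:asymp normality of R/C} then ensure this matches the claimed target $\bQ_R\bPhi_{R,i}\bQ_R^\top$.

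Piece (iii) is the crux and the main obstacle. Writing $\hat\be_{t,i\cdot}=\be_{t,i\cdot}+(\bR\bF_t\bC^\top-\hat\bR\hat\bF_t\hat\bC^\top)_{i\cdot}$ and expanding $\hat\bF_t=\bH_R^{-1}\bF_t{\bH_C^{-1}}^\top+\bDelta_{F,t}$, $\hat\bC=\bC\bH_C+\bDelta_C$ with the residual matrices controlled by the Frobenius rates $p^{-1}\norm{\hat\bR-\bR\bH_R}_F^2=\Op{\frac{1}{\min\braces{p,qT}}}$ and $q^{-1}\norm{\hat\bC-\bC\bH_C}_F^2=\Op{\frac{1}{\min\braces{q,pT}}}$ of Theorem \ref{thm:Rhat_F_norm_convg} and the rate of Theorem \ref{thm:asymp_Ft}, each summand $\bD_{R,\nu,i}$ splits into a leading term (which, up to the rotation $\bH_R$, matches the infeasible summand) plus cross terms carrying at least one estimation-error factor. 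The delicate point is that there are $\Oc{m}$ such summands, so I must show that $m$ times the worst per-lag replacement error is $\op{1}$. After Cauchy--Schwarz and the moment bounds of Assumptions \ref{assume:factor_noise_weak_corr}--\ref{assume:moments_CLT}, the accumulated plug-in error is governed by terms of order $m\cdot(qT)^{-1/2}$ together with quadratic remainders of order $m^2(qT)^{-1}$, and both vanish under the rate $m/(qT)^{1/4}\to0$. The bookkeeping must be carried out uniformly in $1\le\nu\le m$, and one must track that the unknown rotations cancel: since $\hat\bF_t\hat\bC^\top\hat\be_{t,i\cdot}$ estimates $\bH_R^{-1}\bF_t\bC^\top\be_{t,i\cdot}$ (the $\bH_C$ cancels against ${\bH_C^{-1}}^\top$) and $\bar{\hat\bF}$ estimates $\bH_R^{-1}\bmu_F{\bH_C^{-1}}^\top$, the factors of $\bH_R^{-1}$ that premultiply every block combine with $\bV_{pqT,R}^{-1}$ to yield precisely the $\bV_R^{-1}\bQ_R$ normalization of $\bSigma_{R_i}$.

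Finally, I would assemble the three pieces. Consistency of $\bV_{pqT,R}^{-1}$, consistency of the infeasible middle sandwich, and negligibility of the plug-in error combine through the continuous mapping theorem and Slutsky's lemma to give $\hat\bSigma_{R_i}\convprob\bSigma_{R_i}$. The identical argument with $p\leftrightarrow q$, $\bR\leftrightarrow\bC$, and the column-side quantities of Assumption \ref{assume:moments_CLT}.5 yields $\hat\bSigma_{C_j}\convprob\bSigma_{C_j}$, completing the proof.
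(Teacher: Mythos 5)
Your proposal is correct and follows essentially the same route as the paper's proof: reduce the problem to showing that the plug-in HAC statistic $\bD_{R,0,i}+\sum_{\nu=1}^{m}\paran{1-\frac{\nu}{1+m}}\paran{\bD_{R,\nu,i}+\bD_{R,\nu,i}^\top}$ targets $\bH_R^{0,-1}\bPhi_{R,i}{\bH_R^{0,-1}}^\top$ (with the $\bH_C$ rotations cancelling inside each block), and then identify the limiting rotation via $\bH_R \convprob \bSigma_{FC}\,\bQ_R^\top\,\bV_R^{-1}=\bQ_R^{-1}$, so that the outer $\bV_{R,pqT}^{-1}$ sandwich delivers exactly $\bSigma_{R_i}$. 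The only difference is one of emphasis: the paper delegates your pieces (ii) and (iii) — consistency of the infeasible Bartlett-kernel estimator and negligibility of the plug-in error under $m/\paran{qT}^{1/4}\rightarrow 0$ — wholesale to the \cite{newey1987simple} citation, whereas you carry out that bias/variance/plug-in bookkeeping explicitly; your added detail is compatible with, and strictly finer than, what the paper records.
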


\section{Simulation} \label{sec:simulation}

In this section, we use Monte Carlo simulations to assess the adequacy of the asymptotic results in approximating the finite sample distributions of $\hat \bR_{i\cdot}$ and $\hat \bC_{j\cdot}$ and the convergence rate of $\bF_t$.
We only report the result for $\hat\bR_{i\cdot}$ and $\bF_t$ because $\hat \bC_{j\cdot}$ shares similar properties to $\hat\bR_{i\cdot}$.

\subsection{Settings}

Throughout, the matrix observations $\bY_t$'s are generated according to model~\eqref{eqn:mfm}.
The dimension of the latent factor matrix $\bF_t$ is fixed at $k \times r = 3 \times 3$.
The values of $p$, $q$, and $T$ vary in different settings.
The true loading matrices $\bR$ and $\bC$ are independently sampled from the uniform distribution $\calU \left( -1, 1 \right)$.
The latent factor and noise matrices are allowed to be dependent across rows, columns or time, respectively, in different settings to be specified later.

We present the following results under different settings in the subsequent subsections.
We refer our method and the one proposed in \cite{wang2019factor} as $\alpha$-aggregated PCA ($\alpha$-PCA) and auto-covariance based PCA (AC-PCA), respectively.
Results 1-3 compare specifically the results obtained by $\alpha$-PCA with those by AC-PCA.
Result 4 presents the results obtained by $\alpha$-PCA with different values of $\alpha$.
Result 5 illustrates the optimal choice of the hyper-parameter $\alpha$.
\begin{enumerate}[itemsep=-1ex, label=\textit{Result \arabic*.}, leftmargin=*]
    \item \textit{(Estimating latent dimensions.)} The latent dimensions are estimated by the eigen-ratio method of (\ref{eqn:eigen-ratio}). Results are presented in tables of frequencies of $\hat{k} \times \hat{r}$.
    \item \textit{(Proposition \ref{thm:Rhat_F_norm_convg}: Convergence of $\hat \bR, \hat \bC$.)} We report box plots of the ratios between space distances $\calD(\hat \bR, \bR)$ (defined in \eqref{eqn:spdist}) retrieved from $\alpha$-PCA and those from AC-PCA.
    \item \textit{(Theorem \ref{thm:asymp_Ft}: Convergence of  $\hat \bF_t$.)} To demonstrate that $\hat \bF_t$ is estimating a transformation of $\bF_t$ for $t\in [T]$, we compute the $\bH_R$ and $\bH_C$ according to (\ref{eqn:def_HR}) and (\ref{eqn:def_HC}), respectively, and report box plots of $\norm{\hat \bF_t - \bH_R^{-1} \bF_t \bH_C^{-1\top} }$.
    \item \textit{(Theorem \ref{thm:asymp normality of R/C}: Asymptotic normality $\hat \bR - \bR \bH_R$.)} We first consider the asymptotic distribution of $\hat \bR$. We estimate $\hat{\bf{\Sigma}}_{R_0}$ according to \eqref{eqn:cov-est} and average. Then we compute the $k \times 1$ vectors $\hat{\bf{\Sigma} }_R^{-1/2}(\hat{\bR}_{0,\cdot} - \bH_R^{\top}\bR_{0,\cdot})$ and report 1-dimensional histograms of each first component.
     \item \textit{(Optimal $\alpha$ based on Theorem \ref{thm:asymp normality of R/C}.)}
     For each value of $\alpha$ in $[-1, 5]$ with a step-size of $0.1$, we calculate the covariance matrix $\hat\bSigma_{R_0}$ of $\hat\bR_{0\cdot}$ according to \eqref{eqn:cov-est}.
     The empirical optimal $\alpha$ is very close to the theoretical value given in \eqref{eqn:rough-alpha}.
     See Section 5.4 for details.
 \end{enumerate}

\subsection{Comparison of convergence}

In this section, we consider the the finite sample convergence of $\hat \bR_{i\cdot}$, $\hat \bC_{j\cdot}$ and $\bF_t$.
We choose $\left( p,q \right)$ among $\left( 20,20 \right)$, $\left( 20,100 \right)$, or $\left( 100, 100 \right)$ and let $T = 0.5pq$, $pq$, $1.5pq$, or $2pq$, similar to the setup in \cite{wang2019factor}.
For the AC-PCA estimator, we will use lag parameter $h_0 = 1$ since we will be considering uncorrelated models or VAR(1) processes only.
We use the column space distance
\begin{equation} \label{eqn:spdist}
\calD\paran{\bA, \hat\bA} = \norm{\hat\bA\paran{\hat\bA^\top\hat\bA}^{-1}\hat\bA^\top - \bA\paran{\bA^\top\bA}^{-1}\bA^\top},
\end{equation}
for any rank $k$ matrices $\hat\bA,\bA\in\RR^{p\times k}$.
To keep things simple, we only use the second moment information, that is $\alpha = -1$, in this section.
From Theorems \ref{thm:Rhat_F_norm_convg} and \ref{thm:asymp_Ft}, values of $\alpha$ does not affect the convergence rate in the strong factor regime.
Results in this section are based on $100$ repetitions, which are sufficient as shown in the reported standard deviations.

We simulate data and estimations under three settings as follows.
\begin{enumerate}[topsep=0pt, itemsep=-1ex, label=\textit{(\Roman*)}, leftmargin=*]
    \item \textit{(Uncorrelated.)} \label{case:I} The entries of both $\bF_t$ and $\bE_t$ are uncorrelated across time, rows and columns. Specifically, we simulate temporally independent $\bF_t \sim \calM \calN_{3 \times 3} \left( \bzero, \bI, \bI \right)$ and $\bE_t \sim \calM \calN_{p \times q} \left( \bzero, \bI, \bI \right)$.
    \item \textit{(Weakly correlated cross time.)} The entries of $\bF_t$ and $\bE_t$ are uncorrelated across rows and columns, but weakly correlated temporally. Specifically, we simulate $\vect{\bF_t}$ from the following Vector Auto-Regressive model of order one (VAR(1) model):
    \[
    \vect{\bF_t} = \bPhi \cdot \vect{\bF_{t-1}} + \bepsilon_t,
    \]
    where the AR coefficient matrix $\bPhi = 0.1 \cdot \bI_6$ and $\Var{\bepsilon_t} = 0.99\cdot\bI_9$. Thus, $\Var{\vect{\bF_t}} = \bI_9$. We simulate noise $\bE_t$ also from VAR(1),
    \[
    \vect{\bE_t} = \bPsi \cdot \vect{\bE_{t-1}} + \bu_t,
    \]
    where $\bPsi = \psi \cdot \bI_{pq}$ and $\Var{\bu_t} = 1 - \psi^2$. Thus, $\Var{\vect{\bE_t}} = \bI_{pq}$. We choose $\psi = 0.1$ and then increase to $\psi = 0.5$ to examine how temporal dependence may affect our results. Note that setting (II) with $\psi = 0$ corresponds to setting (I).
    \item \textit{(Weakly correlated cross rows or columns.)}The entries of $\bF_t$ and $\bE_t$ are temporally uncorrelated, but $\bE_t$ is weakly correlated across rows and columns. Specifically, we simulate temporally independent $\bF_t \sim \calM \calN_{3 \times 3} \left( \bzero, \bI, \bI \right)$ and $\bE_t \sim \calM \calN_{p \times q} \left( \bzero, \bU_E, \bV_E \right)$, where $\bU_E$ and $\bV_E$ both have 1's on the diagonal, while have $1/p$ and $1/q$ off-diagonal, respectively.
    Note that Setting (III) correspond to setting (I) when $\Psi = 0$ and the variance of $\bu_t$ are specified as $\bV_E \otimes \bU_E$.
\end{enumerate}

For both latent dimension estimation and convergence results, $\alpha$-PCA consistently converges faster with lower variance and estimates more accurately than AC-PCA over all chosen settings, including a special case in Setting (II) where we increase $\psi$, the strength of temporal correlation.
Thus it is implied that $\alpha$-PCA has significant advantages over AC-PCA when $\bF_t$ and $\bE_t$ are uncorrelated or weakly correlated across rows and columns or time.
In the sequel, we report results for latent dimension, loading matrices and factor matrices under Setting (II) with  $\psi = 0.1$ and $\psi = 0.5$.
Results under setting (I) and (III) are similar and relegated to Appendix C.

\paragraph{Accuracy of estimating unknown dimensions.} We present the frequencies of estimated $(\hat k, \hat r)$ pairs for Setting (II) with $\psi = 0.1$ and $\psi = 0.5$ in Table \ref{table:estim_k_II_1} and \ref{table:estim_k_II_5}, respectively.
In latent dimension estimation, our results demonstrate higher frequencies of correct estimation, and the accuracy increases as $p$, $q$, and $T$ increase.

\begin{table}[htpb!]
    \centering
    \begin{subtable}[c]{\textwidth} 
        \centering
        \caption{Setting II, $\psi = 0.1$. } \label{table:estim_k_II_1}
        \resizebox{\linewidth}{!}{\begin{tabular}{c| c c c c |c c c c | c c c c }
                \multicolumn{5}{c}{$p,q = 20,20$} & \multicolumn{4}{c}{$p,q = 100,20$} & \multicolumn{4}{c}{$p,q = 100,100$}\\ \hline
                ($\hat{k},\hat{r}$)  & $T=.5pq$ & $T=pq$& $T=1.5pq$& $T=2pq$ & $T=.5pq$ & $T=pq$& $T=1.5pq$& $T=2pq$ & $T=.5pq$ & $T=pq$& $T=1.5pq$& $T=2pq$\\ \hline
                ($2,3$) & .075 & .08 & .04 & .03 & 0   & 0   & 0   & 0   & 0 & 0 & 0 &0\\
                \rowcolor[HTML]{EFEFEF} & .025 & .005 & .005 & .015 & 0 & 0 & 0 & 0 & 0 & 0 & 0 &0 \\ \hline
                ($3,2$) & .06  & .05 & .035& .06 & .025& .035& .02 & .045& 0 & 0 & 0 &0\\
                \rowcolor[HTML]{EFEFEF} & .01 & .015 & 0 & .005 & .015 & .005 &.005 & 0 & 0 & 0 &0 &0\\ \hline
                ($3,3$) & .78  &  .8 & .85 & .815& .96 & .95 & .965& .94 & 1 & 1 & 1 &1\\
                \rowcolor[HTML]{EFEFEF} & .955 &  .975 & .995 & .98 & .985 & .995 & .995 & .995 & 1 &1 &1 & 1\\ \hline
                other   & .085  & .07 & .075 & .095 & .015& .015& .015& .015   & 0 & 0 & 0 &0\\
                \rowcolor[HTML]{EFEFEF} & .01 & .005 & 0 & 0 & 0 & 0 & .005 & .005 & 0 & 0 & 0 & 0 \\ \hline
        \end{tabular}}
        \vspace{3mm}
    \end{subtable}
    \begin{subtable}[c]{\textwidth} 
        \centering
        \caption{Setting II, $\psi = 0.5$} \label{table:estim_k_II_5}
        \resizebox{\linewidth}{!}{\begin{tabular}{c| c c c c |c c c c | c c c c }
                \multicolumn{5}{c}{$p,q = 20,20$} & \multicolumn{4}{c}{$p,q = 100,20$} & \multicolumn{4}{c}{$p,q = 100,100$}\\ \hline
                ($\hat{k},\hat{r}$)  & $T=.5pq$ & $T=pq$& $T=1.5pq$& $T=2pq$ & $T=.5pq$ & $T=pq$& $T=1.5pq$& $T=2pq$ & $T=.5pq$ & $T=pq$& $T=1.5pq$& $T=2pq$\\ \hline
                ($2,3$) &.095&.105&.075&.035& 0  & 0  & 0  & 0  & 0 & 0 & 0 &0\\
                \rowcolor[HTML]{EFEFEF} &.025& .03&.005&.015& 0  & 0  & 0  & 0  & 0 & 0 & 0 & 0\\ \hline
                ($3,2$) & .07& .09&.075&.085&.055&.06 &.05 &.11 & 0 & 0 & 0 &0\\
                \rowcolor[HTML]{EFEFEF} &.02 & .02& 0  &.01 & .01& .01& 0  & .01& 0 & 0 & 0 & 0\\ \hline
                ($3,3$) & .66&.615& .71&.685&.895&.875&.92 &.835& 1 & 1 & 1 &1\\
                \rowcolor[HTML]{EFEFEF} &.925&.935&.995&.97 &.985&.995&.995& .99& 1 & 1 & 1 & 1\\ \hline
                other   & .175& .19& .14&.195 & .05  & .065  & .03  & .055  & 0 & 0 & 0 &0\\
                \rowcolor[HTML]{EFEFEF} & .03  & .015  &  .005 &  .005 & .005  & 0  & .005  & 0  & 0 & 0 & 0 & 0 \\ \hline
        \end{tabular}}
    \end{subtable}
    \caption{Table of frequencies of estimated ($\hat{k}, \hat{r}$) pairs estimated by $\alpha$-PCA (highlighted rows) and AC-PCA (not highlighted rows) under Setting II, $\psi = 0.1, 0.5$.
        The truth is $(3,3)$.
    } \label{table:estim_k_II}
\end{table}

\paragraph{Error of loading matrices estimation.}

Figure \ref{fig:space_dist_ratio_II} (a) and (b) show box plots of ratios of the column space distances between $\alpha$-PCA and AC-PCA estimators, under Setting II $\psi = 0.1$ and $\psi = 0.5$ respectively.
Clearly, the estimation errors of $\alpha$-PCA are much smaller than those of AC-PCA, since the ratios are ways below 1.

Detailed numeric values are presented in Table \ref{table:mean_sd_II} which contains the means and standard deviations (in parentheses) of $\calD\paran{\hat\bR, \bR}$, $\calD\paran{\hat\bC, \bC}$ estimated by $\alpha$-PCA (highlighted) and AC-PCA.
All values are multiplied by 10 and rounded.

For the space distances $\calD(\hat \bR, \bR)$, $\calD(\hat \bC, \bC)$, there is a tendency for higher convergence as well as smaller variance at higher $(p,q)$, as well as a slight tendency for better convergence at higher $T$, although the latter effect is less pronounced.
Similar to the space distance results, the $\hat\bF$ convergence also improves as we increase $p,q$, and improves slightly as we increase $T$.

\begin{figure}
    \centering
    \begin{subfigure}[b]{\textwidth}
        \centering
        \caption{Setting II, $\psi = 0.1$. }
        \includegraphics[width=\linewidth,height=\textheight,keepaspectratio=true]{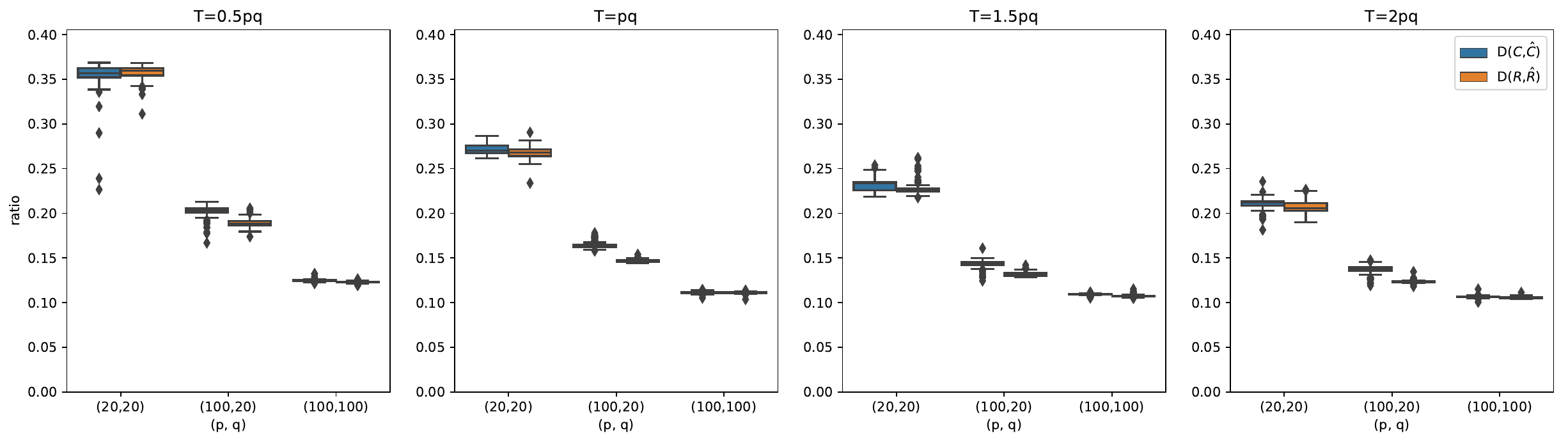}
    \end{subfigure}
    \hfill
    \begin{subfigure}[b]{\textwidth}
        \centering
        \caption{Setting II, $\psi = 0.5$. }
        \includegraphics[width=\linewidth,height=\textheight,keepaspectratio=true]{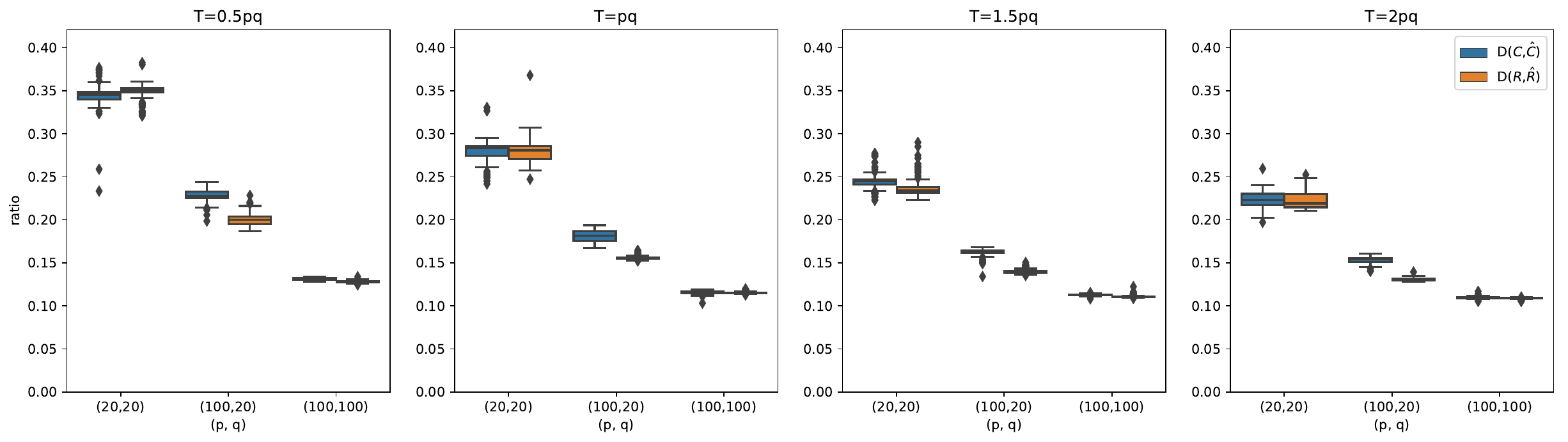}
    \end{subfigure}
    \caption{Box plots of ratios of space distances between $\alpha$-PCA and AC-PCA estimators.
        (a) is under Setting II, $\psi = 0.1$; (b) is under Setting II, $\psi = 0.5$. The estimation errors of $\alpha$-PCA is much smaller than AC-PCA.}
    \label{fig:space_dist_ratio_II}
\end{figure}

\begin{table}[htpb!]
    \centering
    \begin{subtable}[c]{\textwidth} 
        \centering
        \caption{Setting II, $\psi = 0.1$. }
        \resizebox{\linewidth}{!}{
            \begin{tabular}{c c c c c c c c c} \hline
                & \multicolumn{2}{c}{$T = 0.5pq$} & \multicolumn{2}{c}{$T = pq$} & \multicolumn{2}{c}{$T = 1.5pq$} & \multicolumn{2}{c}{$T = 2pq$} \\ \hline
                $(p,q)$  & D($\hat{\bR},\bR$) &  D($\hat{\bC},\bC$) & D($\hat{\bR},\bR$) &  D($\hat{\bC},\bC$) & D($\hat{\bR},\bR$) &  D($\hat{\bC},\bC$) & D($\hat{\bR},\bR$) &  D($\hat{\bC},\bC$)\\ \hline
                \rowcolor[HTML]{EFEFEF}($20,20$) & .40(.08) & .40(.09)  & .29(.07) & .29(.07) & .23(.05) & .23(.05) & .20(.05) & .21(.04) \\
                & 1.12(.24) & 1.14(.31)  & 1.08(.26) & 1.06(.23) & 1.00(.20) & 1.00(.20) & .98(.23) & .98(.18) \\
                \rowcolor[HTML]{EFEFEF}($100,20$) & .14(.01) & .08(.02) & .10(.01) & .05(.02) & .08(.01) & .05(.01) & .07(.01) & .04(.01) \\
                & .76(.06) & .40(.09) & .70(.06) & .35(.07) & .63(.05) & .32(.06) & .58(.05) & .30(.06) \\
                \rowcolor[HTML]{EFEFEF}($100,100$) & .03(.002) & .03(.002) & .02(.002) & .02(.002) & .02(.001) & .02(.001) & .01(.001) & .01(.001)  \\
                & .23(.02) & .23(.02) & .18(.01) & .18(.01) & .15(.01) & .15(.01) & .13(.01) & .13(.01)  \\ \hline
        \end{tabular}}
        \vspace{3mm}
    \end{subtable}
    \begin{subtable}[c]{\textwidth} 
        \centering
        \caption{Setting II, $\psi = 0.5$.}
        \resizebox{\linewidth}{!}{
            \begin{tabular}{c c c c c c c c c} \hline
                & \multicolumn{2}{c}{$T = 0.5pq$} & \multicolumn{2}{c}{$T = pq$} & \multicolumn{2}{c}{$T = 1.5pq$} & \multicolumn{2}{c}{$T = 2pq$} \\ \hline
                $(p,q)$  & D($\hat{\bR},\bR$) &  D($\hat{\bC},\bC$) & D($\hat{\bR},\bR$) &  D($\hat{\bC},\bC$) & D($\hat{\bR},\bR$) &  D($\hat{\bC},\bC$) & D($\hat{\bR},\bR$) &  D($\hat{\bC},\bC$)\\ \hline
                \rowcolor[HTML]{EFEFEF}($20,20$) & .52(.12) & .52(.13)  & .38(.11) & .38(.10) & .29(.07) & .30(.07) & .26(.07) & .27(.06) \\
                & 1.50(.33) & 1.51(.41)  & 1.36(.32) & 1.34(.29) & 1.23(.26) & 1.23(.26) & 1.18(.25) & 1.19(.23) \\
                \rowcolor[HTML]{EFEFEF}($100,20$) & .17(.02) & .11(.02) & .12(.01) & .07(.02) & .10(.01) & .06(.01) & .09(.01) & .05(.01) \\
                & .87(.07) & .46(.10) & .79(.06) & .40(.08) & .72(.06) & .36(.07) & .66(.06) & .34(.07) \\
                \rowcolor[HTML]{EFEFEF}($100,100$) & .03(.003) & .04(.003) & .02(.002) & .02(.002) & .02(.002) & .02(.001) & .02(.001) & .01(.001)  \\
                & .27(.02) & .27(.02) & .21(.02) & .21(.02) & .18(.01) & .18(.01) & .16(.01) & .16(.01)  \\ \hline
        \end{tabular}}
    \end{subtable}
    \caption{Means and standard deviations (in parentheses) of $\calD\paran{\hat\bR, \bR}$, $\calD\paran{\hat\bC, \bC}$ estimated by $\alpha$-PCA (highlighted rows) and AC-PCA (not highlighted rows) under Setting II, $\psi = 0.1, 0.5$. All values multiplied by 10 and rounded.}
    \label{table:mean_sd_II}
\end{table}

\paragraph{Factor matrices estimation errors.}

Figure \ref{fig:F_norm_II} presents the box-plots of the $\ell_2$ norm of the discrepancy between estimated $\hat\bF_t$ and transformed true $\bF_t$, that is temporal-averaged $\norm{\hat\bF_t - \bH_R^{-1}\bF_t{\bH_C^{-1}}^\top}$, under setting II, $\psi = 0.1$ and $0.5$.
As expected, the estimation errors decrease when $p$ or $q$ increases while not affected by $T$.
Results of $\norm{\hat\bF_t - \bH_R^{-1}\bF_t{\bH_C^{-1}}^\top}$ for AC-PCA are not available since \cite{wang2019factor} don't have explicit forms for the rotation matrices $\bH_R$ and $\bH_C$.

\begin{figure}[htpb!]
    \centering
    \includegraphics[width=\linewidth,keepaspectratio=true]{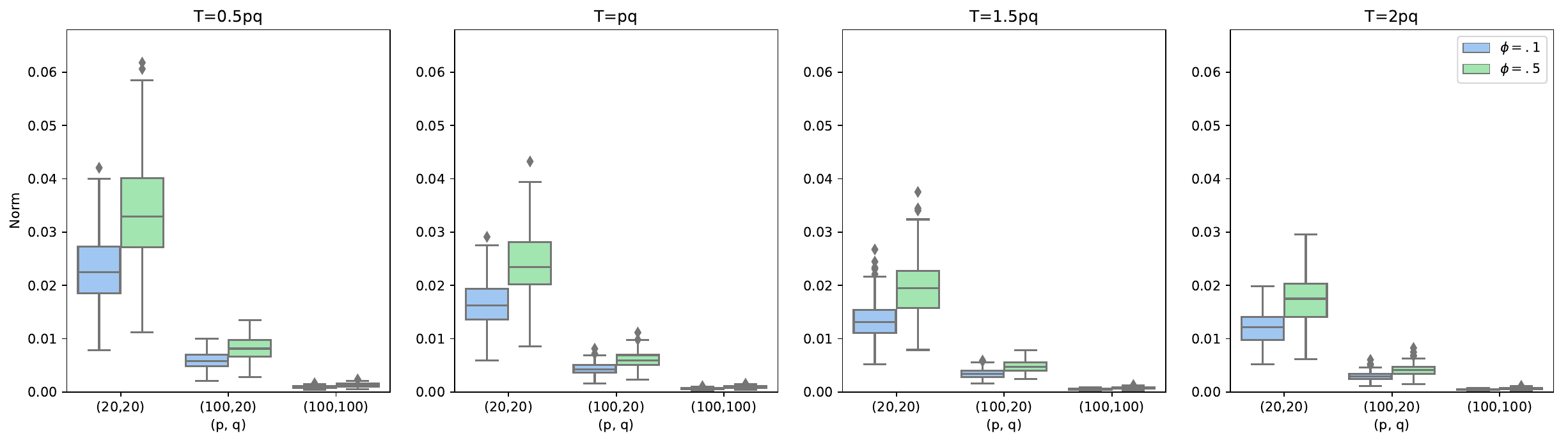}
    \caption{Boxplot of $\norm{\hat\bF_t - \bH_R^{-1}\bF_t{\bH_C^{-1}}^\top}$ under setting II, $\psi = 0.1$ and $0.5$.}
    \label{fig:F_norm_II}
\end{figure}

\subsection{Asymptotic normality} \label{sec:simul-distn}

In this section, we consider the asymptotic normality of the first row of $\hat\bR - \bH_R^\top \bR$ under different values of $\alpha$.
We simulate data under the following setting:
\begin{enumerate}[label=\textit{(\Roman*)}, leftmargin=*]
    \setcounter{enumi}{3}
    \item \textit{($\bF_t$ with non-zero mean.)} The entries of both $\bF_t$ and $\bE_t$ are uncorrelated across time, rows and columns. Specifically, we simulate temporally independent $\bF_t \sim \calM \calN_{3 \times 3} \left( 3\cdot\bI, \bI, \bI \right)$ and $\bE_t \sim \calM \calN_{p \times q} \left( \bzero, \bI, \bI \right)$. \label{case:IV}
\end{enumerate}
According to Theorem \ref{thm:asymp normality of R/C}, the asymptotic normality requires $\sqrt{qT}/p \to 0$ or $\sqrt{pT}/q \to 0$.
Thus we choose $\paran{p,q, T}$ among $\paran{200, 200, 100}$, $\paran{200, 200, 150}$ and $\paran{400, 400, 250}$.
The results for asymptotic normality are based on $1000$ repetitions.
We report results for $p, q, T = 200, 200, 150$ in the main text and the results for the other two settings are relegated to the appendix
Under all settings, the presented QQ plots and histograms demonstrate the asymptotic normality expected from the theorem.

Figure \ref{fig:IV-200-200-150-QQ} presents the QQ plots of the first dimension of the first row of $\hat\bR - \bR\bH_R$ under setting (IV) with $p, q, T = 200, 200, 150$.
Results of the other dimensions are similar.

\begin{figure}[htpb!]
    \centering
    \begin{subfigure}[b]{0.3\textwidth}
        \centering
        \caption*{$\alpha=-1$}
        \includegraphics[width=\linewidth,keepaspectratio=true]{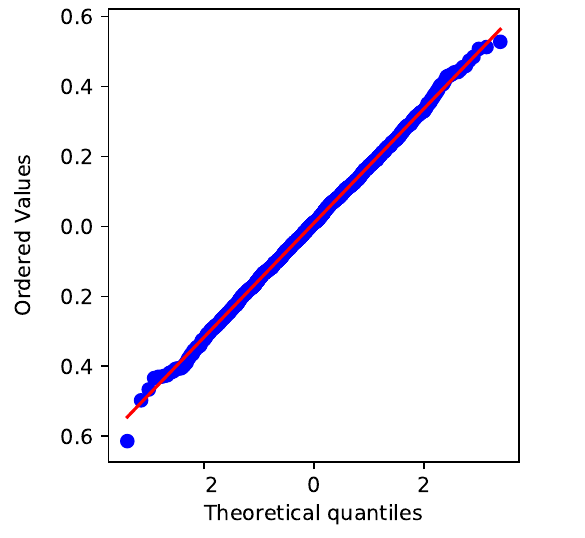}
    \end{subfigure}
    \begin{subfigure}[b]{.3\textwidth}
        \centering
        \caption*{$\alpha=0$}
        \includegraphics[width=\linewidth,keepaspectratio=true]{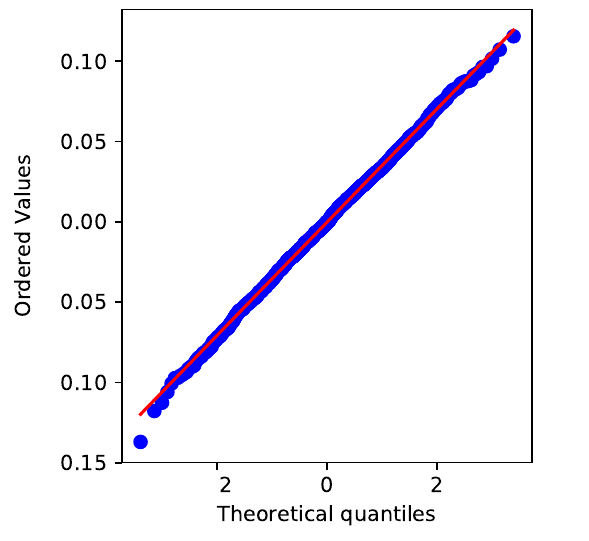}
    \end{subfigure}
    \begin{subfigure}[b]{.3\textwidth}
        \centering
        \caption*{$\alpha=1$}
        \includegraphics[width=\linewidth,keepaspectratio=true]{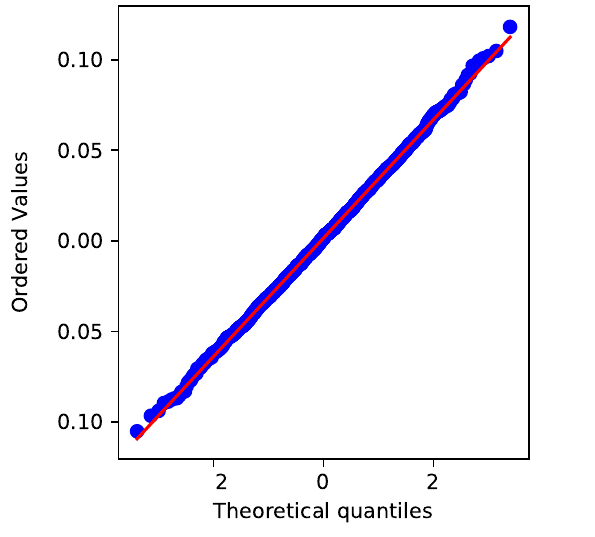}
    \end{subfigure}
    \caption{QQ plots of the first dimension of the first row of $\hat\bR - \bR\bH_R$ with $\alpha=-1$ (left), $0$ (middle) and $1$ (right) under setting (IV) with $p, q, T = 200, 200, 150$.} \label{fig:IV-200-200-150-QQ}
\end{figure}

We calculate the covariance matrix $\hat\bSigma_{R_0}$ of the first row of $\hat\bR - \bR\bH_R$ according to equation \eqref{eqn:cov-est} and plot the histograms of the first dimension of $\hat\bSigma_{R_0}^{-1/2}\paran{\hat\bR_{0\cdot} - \bH_R^\top\bR_{0\cdot}}$ in Figure \ref{fig:IV-200-200-150-dist}.
The plots for other components are similar.

\begin{figure}[htpb!]
    \centering
    \begin{subfigure}[b]{.3\textwidth}
        \centering
        \caption*{$\alpha=-1$}
        \includegraphics[width=\linewidth,keepaspectratio=true]{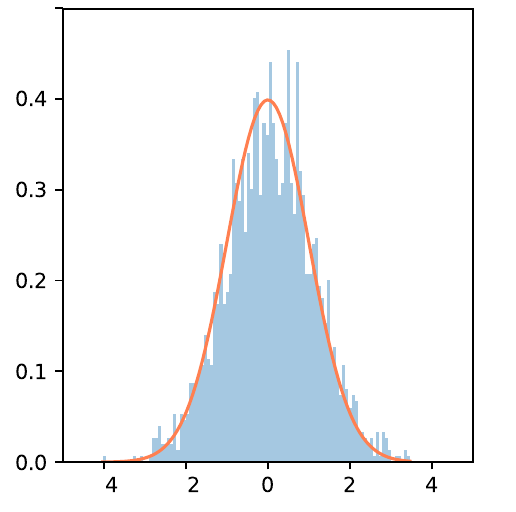}
    \end{subfigure}
    \begin{subfigure}[b]{.3\textwidth}
        \centering
        \caption*{$\alpha=0$}
        \includegraphics[width=\linewidth,keepaspectratio=true]{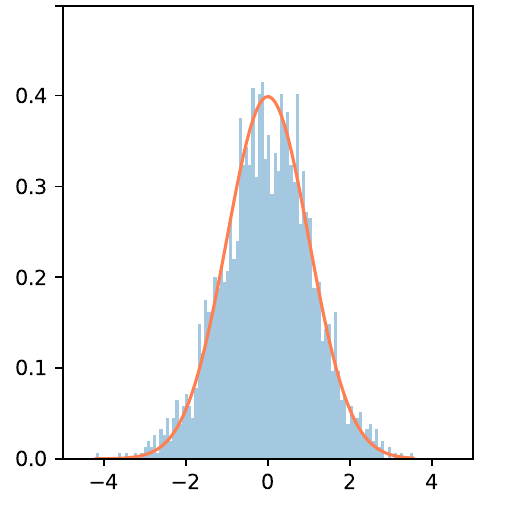}
    \end{subfigure}
    \begin{subfigure}[b]{.3\textwidth}
        \centering
        \caption*{$\alpha=1$}
        \includegraphics[width=\linewidth,keepaspectratio=true]{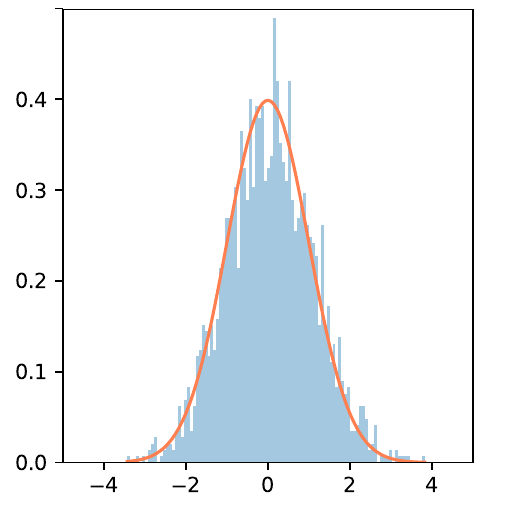}
    \end{subfigure}
    \caption{Histograms of the first dimension of $\hat\bSigma_{R_0}^{-1/2}\paran{\hat\bR_{0\cdot} - \bH_R^\top\bR_{0\cdot}}$ with $\alpha=-1$ (left), $0$ (middle) and $1$ (right) under setting \ref{case:IV} with $p, q, T = 200, 200, 150$. The lines plot the distribution of standard normal distribution.} \label{fig:IV-200-200-150-dist}
\end{figure}

\subsection{ Hyper-parameter selection and optimality of $\alpha$ } \label{sec:simul-alpha}

In this section, we illustrate the optimal choice of the hyper-parameter $\alpha$ on simulated data set.
Specifically, we consider Setting \ref{case:I} and \ref{case:IV} where $\bF_t$ has zero and non-zero means, respectively.
The dimension $\paran{p,q, T}$ is fixed at $\paran{200, 200, 150}$.
The range of $\alpha$ is in $[-1, 5]$ with a step-size of $0.1$.
For each value of $\alpha$, we calculate the covariance matrix $\hat\bSigma_{R_0}$ of $\hat\bR_{0\cdot}$ according to \eqref{eqn:cov-est}.
Figure \ref{fig:alpha} presents the estimation errors and the covariance of the estimator versus different values of $\alpha$.
Under Setting \ref{case:IV} where $\bE_t$ are white noise and independent of $\bF_t$, we know that $\bPhi_{R,i,12} = \bPhi_{C, j, 21}=\bzero$.
The optimal value according to \eqref{eqn:rough-alpha} is $\alpha_{opt} = 0$.
The sample estimation of $\hat\alpha_{opt}$ using \eqref{eqn:rough-alpha} from 200 repetitions has mean $-0.0144$ and standard deviation $0.009$.

Figure \ref{fig:alpha} (a) plots the diagonal elements $\hat\sigma_{R,ii}^2$, $i\in[3]$, and the trace of the covariance matrix $\hat\bSigma_{R_0}$.
The $\alpha$ value corresponding to the dip of all lines are around $\alpha=0$, confirming our calculation of the value of $\alpha$ that minimizing the covariance of estimators.
Although $\alpha$ does not affect the convergence rate in Theorems \ref{thm:Rhat_F_norm_convg} and \ref{thm:asymp_Ft}, Figure \ref{fig:alpha} (b) show that the errors using $\alpha=-1$ is larger under the finite sample setting.

\begin{figure}
    \centering
    \begin{subfigure}[b]{.3\textwidth}
        \centering
        \caption{Setting \ref{case:IV}}
        \includegraphics[width=\linewidth,keepaspectratio=true]{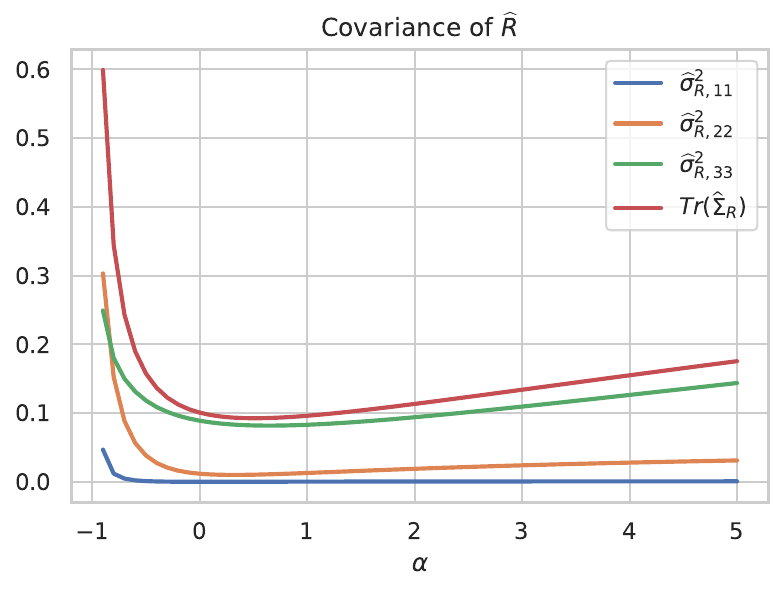}
    \end{subfigure}
    \begin{subfigure}[b]{.3\textwidth}
        \centering
        \caption{Setting \ref{case:IV}}
        \includegraphics[width=\linewidth,keepaspectratio=true]{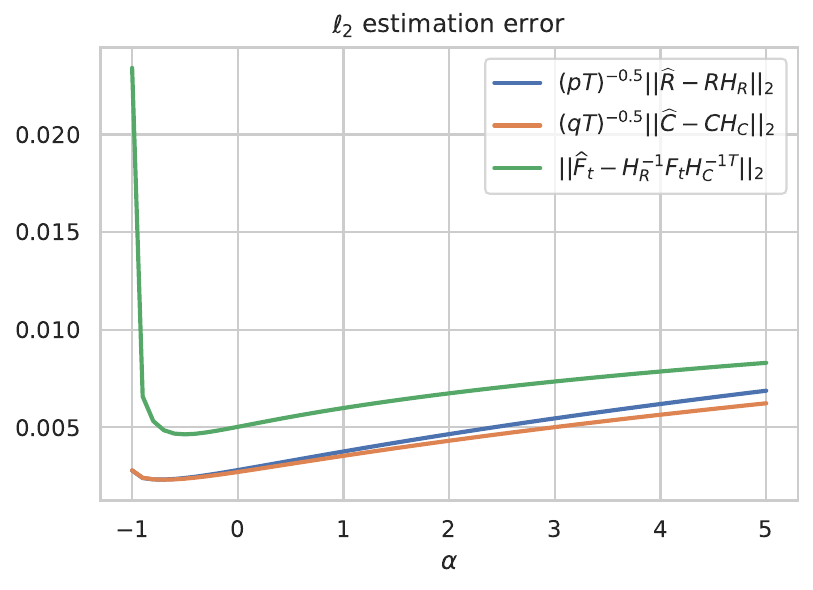}
    \end{subfigure}

    \begin{subfigure}[b]{.3\textwidth}
        \centering
        \caption{Setting \ref{case:I}}
        \includegraphics[width=\linewidth,keepaspectratio=true]{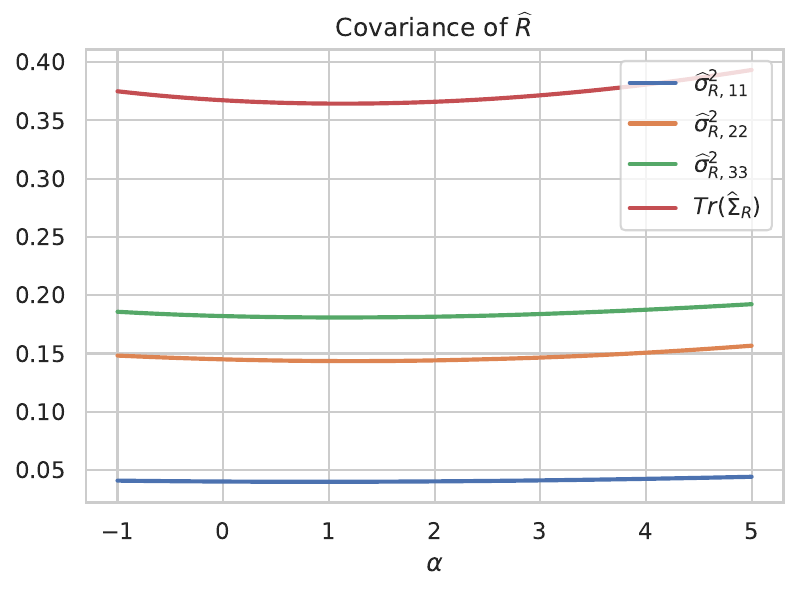}
    \end{subfigure}
    \begin{subfigure}[b]{.3\textwidth}
        \centering
        \caption{Setting \ref{case:I}}
        \includegraphics[width=\linewidth,keepaspectratio=true]{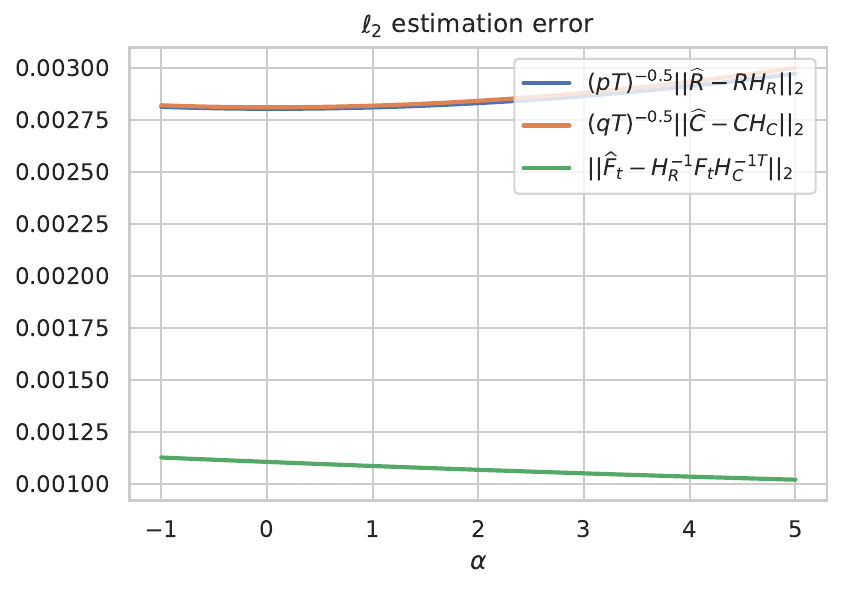}
    \end{subfigure}
    \caption{Covariance of $\sqrt{qT} \left( \hat\bR_{1 \cdot} - \bH_R^\top \bR_{1 \cdot} \right)$ and $\ell_2$ estimation error versus different value of $\alpha$'s in $[-1, 5]$ with a step-size of $0.1$.
        Subplots (a) and (b) are under the Setting (IV) where $\bmu_F \ne \bzero$.
        Subplots (c) and (d) are under Setting (I) where $\bmu_F=0$.
        Values plotted are means of 200 repetitions.} \label{fig:alpha}
\end{figure}

Figure \ref{fig:alpha} (c) and (d) are simulated under Setting (I) where $\bF_t$ has zero mean.
As expected the value of $\alpha$ does not make much difference in the estimators' properties.

\section{Applications}
\label{sec:application}

\subsection{Example 1: Multinational Macroeconomic Indices} \label{sec:application_macro_indices}

In this section, we apply our estimation method to the multinational macroeconomic indices data set used in \cite{chen2019constrained}.
The data set is collected from OECD.
It contains 10 quarterly macroeconomic indices of 14 countries from 1990.Q2 to 2016.Q4  for 107 quarters.
Thus, we have $T = 107$ and $p_1 \times p_2 = 14 \times 10$ matrix-valued time series.
The countries include United States, Canada, New Zealand, Australia, Norway, Ireland, Denmark, United Kingdom, Finland, Sweden, France, Netherlands, Austria and Germany.
The indices cover four major groups, namely production (P:TIEC, P:TM, GDP), consumer price (CPI:Food, CPI:Ener, CPI:Tot), money market (IR:Long, IR:3-Mon), and international trade (IT:Ex, IT:Im).
Each original univariate time series is transformed by taking the first or second difference or logarithm to satisfy the mixing condition in Assumption \ref{assume:alpha-mixing}.
See Table
10 
in Appendix
D 
for detailed descriptions of the data set and transformations.
Figure 16 
in Appendix D shows the transformed time series of macroeconomic indicators of multiple countries.
It is obvious that there exist some similar patterns among time series in the same row or column.

We apply the $\alpha$-PCA proposed in Section \ref{sec:aggr} for different $\alpha$ in the range of $[-1,5]$ with step size $0.1$ on the OECD data set.
We use the ratio-based method in \eqref{eqn:eigen-ratio} as well as the scree plots to estimate the number of latent dimensions.
Using the scree plot to select the minimal number of dimensions that explain at least 80 percent of the variance of $\hat \bM$, we get that $\hat k, \hat r = 4, 6$.
While the ratio based method gives $\hat k, \hat r = 1,2$.
Due to the dominance of the largest factors and weak signal in real data, the estimate by \eqref{eqn:eigen-ratio} tends to be much smaller than the one given by the scree plot.
However, for the purpose of presenting and analyzing some example loading matrix estimates, we will illustrate with latent dimensions $(k, r) = (4, 4)$.

Letting $\hat\bSigma_R= p^{-1}\sum_{i=1}^p \hat\bSigma_{R_i}$ and $\bSigma_C= q^{-1}\sum_{j=1}^q \hat\bSigma_{C_j}$, we plot the traces $\Tr{\hat\bSigma_R}$ and $\Tr{\hat\bSigma_C}$ versus different values of $\alpha$'s in Figure \ref{fig:oecd-traces-alpha}.
The minimizing $\alpha$'s for $\Tr{\hat\bSigma_R}$ and $\Tr{\hat\bSigma_C}$ are $\hat\alpha_R=0.5$ and $\hat\alpha_C=0.6$, respectively.
Note that the proposed estimation method supports using different values of $\alpha_R$ and $\alpha_C$, since the estimation of $\bR$ and $\bC$ are decoupled and the $\alpha$ can be any finite given scalars in $[-1,\infty)$.
Since $\hat\alpha_R$ and $\hat\alpha_C$ are close, we choose $\alpha=0.55$ in the middle for a simple illustration.
To illustrate the interpretation of model \eqref{eqn:mfm} in the real data set, we first present and analyze the loading matrices estimated by $\alpha$-PCA with $\alpha=0.55$ .
Figures \ref{fig:oecd_chen_eig} presents the eigenvalues and the eigen-ratios of $\paran{\hat\bM_{R}, \hat\bM_{C}}$ calculated according to \eqref{eqn:hat-MR-aggr} and \eqref{eqn:hat-MC-aggr} with with $\alpha=0.55$.

\begin{figure}[htpb!]
    \centering
    \includegraphics[width=.4\linewidth,height=\textheight,keepaspectratio=true]{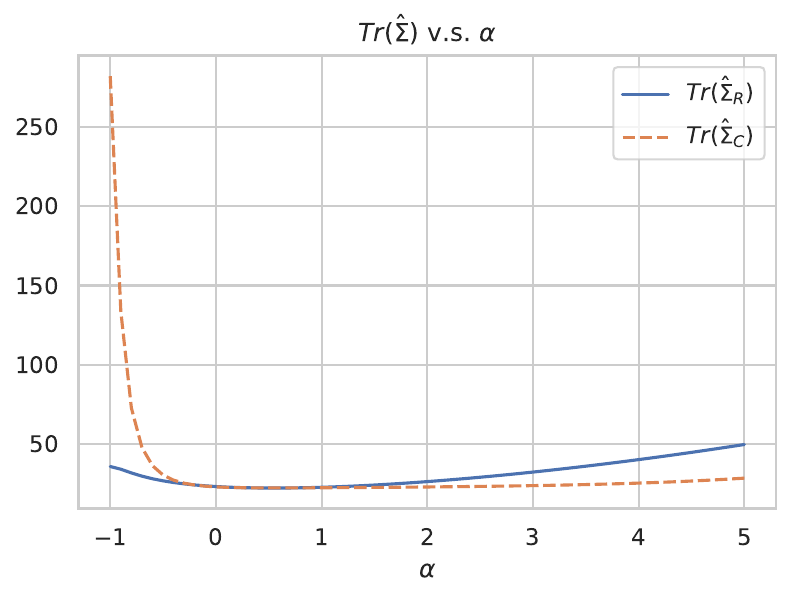}
    \caption{Traces of covariance $\Tr{\hat\bSigma_R}$ and $\Tr{\hat\bSigma_C}$ versus different values of $\alpha$'s in the range of $[-1,5]$ with step size $0.1$.
        The minimizing $\alpha$'s for $\Tr{\hat\bSigma_R}$ and $\Tr{\hat\bSigma_R}$ are $0.5$ and $0.6$, respectively. }
    \label{fig:oecd-traces-alpha}
\end{figure}

\begin{figure}[ht!]
     \centering
    \begin{subfigure}[b]{.3\textwidth}
        \centering
        \includegraphics[width=\linewidth,keepaspectratio=true]{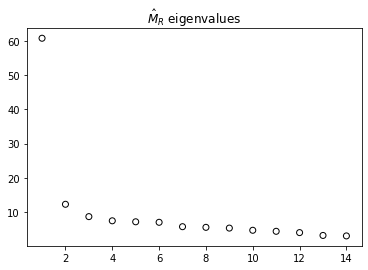}
    \end{subfigure}
    \begin{subfigure}[b]{.3\textwidth}
        \centering
        \includegraphics[width=\linewidth,keepaspectratio=true]{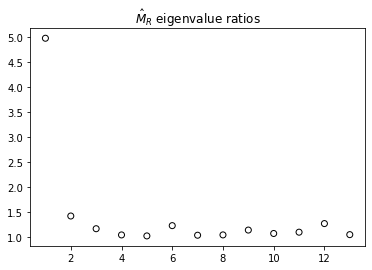}
    \end{subfigure}

    \begin{subfigure}[b]{.3\textwidth}
        \centering
        \includegraphics[width=\linewidth,keepaspectratio=true]{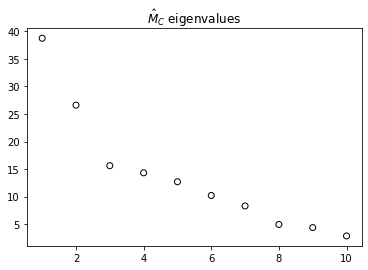}
    \end{subfigure}
    \begin{subfigure}[b]{.3\textwidth}
        \centering
        \includegraphics[width=\linewidth,keepaspectratio=true]{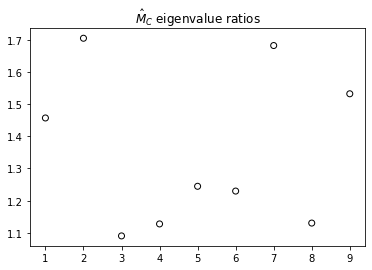}
    \end{subfigure}
    \caption{Eigenvalues and ratios of $\hat \bM_{R}$ and $\hat \bM_{C}$ using the OECD data, using $\alpha$-PCA with $\alpha=0.55$. }
    \label{fig:oecd_chen_eig}
\end{figure}

From these $\hat\bM$ with $(k,r) = (4, 4)$, we calculate loading matrices $\hat\bR_{\alpha}, \hat\bC_{\alpha}$ and $\hat\bR_{AC}, \hat\bC_{AC}$ for $\alpha$-PCA and AC-PCA, respectively.
Table \ref{table:Q_rot} shows estimates of the row and column loading matrices.
They are normalized so that the norm of each column is one, VARIMAX-rotated to reveal a clear structure, and scaled and rounded for ease of display.

We can interpret the latent structure of the global macro-economy by analyzing the estimated row and column loading matrices.
Specifically, from  pair of $\hat\bR_{\alpha,rot}$ and $\hat\bC_{\alpha,rot}$ or pair $\hat\bR_{AC,rot}$ and $\hat\bC_{AC,rot}$ we can group (clustering) some of countries or macroeconomic indices based on their loading matrices.
Using row loading matrices, three groups can easily be formed: Group 1: (USA, CAN), Group 2: (NZL, AUS), Group 3: (FRA, NLD, AUT, DEU). In this  example, USA and CAN both load heavily on row 3 of $\hat\bR_{\alpha,rot}$ and $\hat\bR_{AC, rot}$, but lightly on all other rows, NZL and AUS both load heavily only on row 2 of $\hat\bR_{\alpha,rot}$ and $\hat\bR_{AC, rot}$, and FRA, NLD, AUT, DEU all load the most on rows 1.
This analysis can reveal what countries have stronger correlations in their macroeconomic features.
Interestingly, loading matrices estimated by both methods tend to suggest similar groupings.

From the column loading matrices, we can form groups 1(CPI:Food, CPI: Tot, CPI: Ener), 2:(IR:Long, IR: 3-Mon), 3:(P:TIEC, P:TM, GDP), 4: (IT:Ex, IT:Im) for both $\hat\bC_{\alpha,rot}$ and $\hat\bC_{AC,rot}$.
We can also infer the meaning of each latent column factor from the column loading matrices.
Take $\hat\bC_{\alpha,rot}$ for example, groups 1,2, 3, 4 load most heavily on the 2nd, 4th, 3rd and 1st rows, respectively.
Thus, the 2nd, 4th, 3rd and 1st column factors can be interpreted as factors that are related to consumer price, money market, production, and international trade, respectively.
The results are consistent with our prior knowledge of these macroeconomic indices, where groups 1-4 correspond to the major groups we previously introduced.
Corresponding rotated factor series are plotted in Figure \ref{fig:oecd-fhat}.

\begin{table}[htpb!]
    \centering
    \begin{subtable}[c]{\textwidth}
        \resizebox{\linewidth}{!}{
            \begin{tabular}{cc|cccccccccccccc}
                \hline
                Model & Row & USA & CAN & NZL & AUS & NOR & IRL & DNK & GBR & FIN & SWE & FRA & NLD & AUT & DEU \\ \hline
                \multirow{4}{*}{$\hat{\bR}_{\alpha, rot}$} & 1 & 1 & 0  & -1 & -1 & 2 & 2 & 3 &2 & 3 & 3 & 4 & 4 & 4 & 4 \\
                & 2 & 1 & 0 & 6 & 6 & 2 & 2 & 2 & 3 & 1 &2 & 0 &0 & -1 & -1\\
                & 3 & 6 & 7 &1 &0 & -1 &-1 & -1 &0 & 0 & -2 & 0 & 1 & 0 & 0 \\
                & 4 & 0 & 0 & 0 & 1 & 8 & -5 & -1 & -1 & 0 & 0 & -1 & 1 & 0 & 0 \\ \hline
                 \multirow{4}{*}{$\hat{\bR}_{AC, rot}$} & 1 &-1 & 2 & 1 & -1 & -1 &-1 &-2 &-4 & -3 & -4 & -4 & -4 & -4 & -4 \\
                & 2 & 2 & -1 & 5 & 5 & 1 &5 &3 & 2 &-1 &1 &1 &0 & 0 & 0 \\
                & 3 & 7 & 7 &1 & 1 & -1 &-2 &-1 &0 & 1 & 0 & 0 & 0 & 0 & -1 \\
                & 4 & 1 & -1 & -1 & -2 & -9 & 3 & 0 & 0 & 0 & -1 & 1 & -1 & 0 & 0 \\ \hline
            \end{tabular}
        }%
        \vspace{2mm}
    \end{subtable}
    \begin{subtable}[c]{\textwidth}
        \resizebox{\linewidth}{!}{
            \begin{tabular}{cc|cccccccccc}
                \hline
                Model & Row & CPI:Food & CPI:Tot & CPI:Ener & IR:Long & IR:3-Mon & P:TIEC & P:TM & GDP & IT:Ex & IT:Im \\ \hline
                \multirow{4}{*}{$\hat{\bC}_{\alpha, rot}$} & 1 & 0& 0 & 0 & 0 & 0 & 6 & 6 & 5 & 0 & 0 \\ & 2
                & 6 & 5 & 7 & 0 & 1 & 1 & 0 & -1 & 0 &0\\
                & 3 & -2 & 1 & 0 &0 &0 & 0 & 0 & 0 & 7 & 7 \\
                & 4 & -1 & 1 & 0 & 7 & 7 &-1 &0 &1 & 0 & 0 \\ \hline
                \multirow{4}{*}{$\hat{\bC}_{AC, rot}$} & 1 & -2 &4 & 1 & 1 & -1 & 0 & 0 & 0 & 6 & 6 \\
                & 2 & 6 & 3 &7 & -1 & 1 & 0 & 0 & -1 &-1 &0 \\
                & 3 & -1 & 0 & 1 &0 &0 & -6 & -6 & -6 & 0 & 0 \\
                & 4 & 0 & -1 & 0 & -8 & -6 & 1 &0 & -1 & 0 & 0 \\ \hline
            \end{tabular}
        }
    \end{subtable}
    \caption{Estimations of row and column loading matrices (VARIMAX rotated) of $\alpha$-PCA (subscripted by $\alpha$) and AC-PCA (subscripted by $AC$) with $\alpha = 0.55$ for multinational macroeconomic indices. The loadings matrix are multiplied by $10$ and rounded to integers for ease in display.}
    \label{table:Q_rot}
\end{table}

\begin{figure}[ht!]
    \centering
    \begin{subfigure}[b]{.45\textwidth}
        \centering
        \caption{$\alpha$-PCA, $\alpha=0.55$}
        \includegraphics[width=\linewidth,keepaspectratio=true]{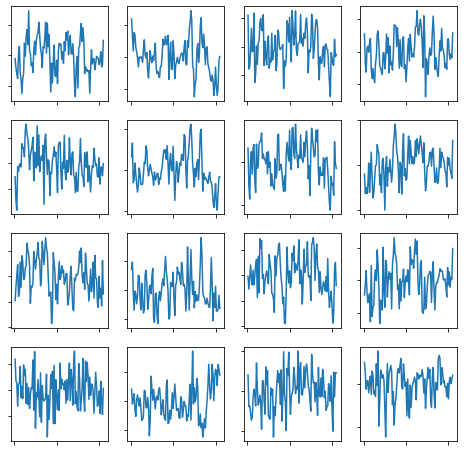}
    \end{subfigure}
    \hspace{4ex}
    \begin{subfigure}[b]{.45\textwidth}
        \centering
        \caption{AC-PCA}
        \includegraphics[width=\linewidth,keepaspectratio=true]{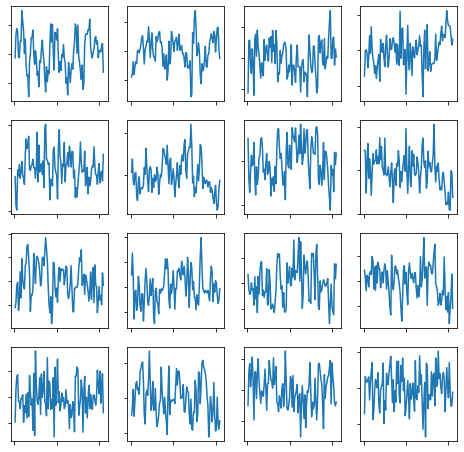}
    \end{subfigure}
    \caption{Plots of rotated $\hat\bF_t \in \RR^{4\times 4}$ estimated by $\alpha$-PCA, $\alpha=0.55$ and AC-PCA, respectively.
    The rotation corresponds to the VARIMAX rotation of $\hat\bR$ and $\hat\bC$ in Table \ref{table:Q_rot}.
    According to the weights in Table  \ref{table:Q_rot}, the 1st - 4th columns correspond to the important components of GDP, CPI, international trade and interest rate, respectively.
    }   \label{fig:oecd-fhat}
\end{figure}

Next, we illustrate choosing best alpha values based on prediction errors.
Specifically, we use 10-fold cross validation (CV) to compare the performance of $\alpha$-PCA with different $\alpha$ in the range of $[-1,2]$ with AC-PCA (with lag factor $h_0 = 2$).
We divide the entire time span into 10 sections and choose each of them as testing data. With time series data, the training data may contain two disconnected time spans.
For AC-PCA, in the case of disconnected $n$ time spans we calculate matrices $\hat \bM_R^{(1)} \dots \hat \bM_R^{(n)}$ according to \eqref{eqn:hat-MR-aggr} over each time span separately.
The matrix $\hat \bM_R$ is re-defined as the sum of $\sum_{i=1}^n{\hat \bM_R^{(i)}}$.
Loading matrices and latent dimensions are estimated from this newly defined $\hat \bM_R$ with procedures in Section \ref{sec:estimation}.
We define out of sample $R^2$ on a testing set of size $N$ as
\begin{equation} \label{eqn:out-of-sample-R2}
\text{\rm out of sample } R^2 \defeq 1- \frac{\sum_{t=1}^N \norm{\bY_t - \hat\bY_t}_F^2}{\sum_{t=1}^N \norm{\bY_t - \bar\bY}_F^2},
\end{equation}
where $\bar\bY=\frac{1}{N}\sum_{t=1}^N \bY_t$ and $\hat\bY_t = \hat\bR\hat\bR^\top\bY_t\hat\bC\hat\bC^\top$.
The denominator is the baseline total sum of squares (TSS) from approximating $\bY_t$ by the sample mean $\bar\bY$.
The nominator represent the residual sum of squares (RSS) from approximating $\bY_t$ by $\hat\bY_t$.
The total sum of squares (TSS) averaged over the 10-fold CV on the testing set is 1451.35, computed using sample average as estimator.
Figure \ref{fig:appl-alpha} (a) shows the out of sample $R^2$ versus different values of $\alpha$ for models with different chosen latent dimensions.
According the metric of maximizing the out of sample $R^2$, the best value of $\alpha$ is 0.4 for latent dimensions $(4, 4)$.
The values of the out of sample $R^2$ are reported in Table \ref{table:macro_indices_ss_outofsample_10CV} for models for the maximizing $\alpha$ and $\alpha = -1, 0, 1$ with different chosen latent dimensions.
All reported values are the averages over the 10-fold CV.
Evidently, the proposed estimation procedure with all chosen values of $\alpha$ performs better than AC-PCA at each chosen $(k,r)$ pair, even though we do not account for temporal dependence.
This implies that the contemporaneous covariance should not be discarded even for the time series data.

\begin{table}[ht!]
    \centering
   \begin{tabular}{c|c|cccccc}
       \hline\cr
       \multicolumn{2}{c|}{\diagbox
       {Method}{$(k,r)$}} & (6,5) & (5,5) & (4,5) & (4,4) & (3,4) & (3,3) \\ \hline
       \multirow{4}{*}{$\alpha$-PCA} & $\alpha=-1$ & 0.465 & 0.422 & 0.392 & 0.310 & 0.296 & 0.159 \\ \cline{2-8}
       & $\alpha=0$ & 0.553 & 0.515 & 0.478 & 0.418 & 0.387 & 0.320 \\ \cline{2-8}
       & $\alpha=1$ & 0.551 & 0.506 & 0.481 & 0.420 & 0.383 & 0.324 \\ \cline{2-8}
       & $\alpha_{opt}$ & \begin{tabular}[c]{@{}c@{}}0.556\\ (0.3)\end{tabular} & \begin{tabular}[c]{@{}c@{}}0.516\\ (-0.2)\end{tabular} & \begin{tabular}[c]{@{}c@{}}0.486\\ (0.7)\end{tabular} & \begin{tabular}[c]{@{}c@{}}0.424\\ (0.4)\end{tabular} & \begin{tabular}[c]{@{}c@{}}0.391\\ (0.3)\end{tabular} & \begin{tabular}[c]{@{}c@{}}0.328\\ (0.2)\end{tabular} \\ \hline
       \multicolumn{2}{l|}{AC-PCA} & 0.429 & 0.393 & 0.354 & 0.248 & 0.216 & 0.092 \\ \hline
   \end{tabular}
    \caption{Results of 10-fold CV of out-of-sample performance for the multinational macroeconomic indexes.
        The numbers shown are average over the cross validation.
        The numbers in parentheses on the line of $\alpha_{opt}$ are the values of $\alpha$'s maximizing the out-of-sample $R^2$. }
    \label{table:macro_indices_ss_outofsample_10CV}
\end{table}

\subsection{Example 2: Image data sets} \label{sec:application_image}

An important category of matrix variables is the 2-D gray-scale image data.
One gray-scale image is represented as a single matrix $\bY_t$, with each element corresponding to one image pixel. The values in the matrix represent intensities within some range.
In this section, we apply our method to two real-world image data sets:
\begin{itemize}
    \item ORL\footnote{\url{http://www.uk.research.att.com/facedatabase.html}} is a well-known dataset for face recognition \citep{samaria1994parameterisation}.
    It contains the face images of $40$ persons, for a total of $400$ images.
    The size of the images is $92 \times 112$.
    \item USPS \footnote{\url{http://www-stat-class.stanford.edu/~tibs/ElemStatLearn/data.html}} is an image data set consisting of $9298$ handwritten digits of ``0'' through ``9''.
    We use a subset of USPS.
    This subset contains $300$ images for each digit, for a total of $3000$ images. The resolution of the images is $16 \times 16$.
\end{itemize}

The estimation of the low-rank signal part $\hat\bR\hat\bF_t\hat\bC^\top$ in \eqref{eqn:mfm} can be viewed as a compressed reconstruction of the original image.
In the signal processing literatures, the goodness of approximation can be measure by the \textit{Root Mean Squared Reconstruction Error (RMSRE)} which is basically the square root of the mean residual sum of squares (RSS).
To be consistent with Section \ref{sec:application_macro_indices}, we use the ratio between RSS and TSS in the empirical evaluation of our method with different values of $\alpha$.
Figure \ref{fig:appl-alpha} (b) and (c) show, respectively for ORL and USPS, the plots of RSS/TSS versus different values of $\alpha$ for models with different chosen latent dimensions.
The small error suggests of dimensionality reduction from the original image $\bY_t$ to the new representation $\bF_t$ is effective.

\begin{figure}[htpb!]
    \centering
    \begin{subfigure}[b]{.3\textwidth}
        \centering
        \caption{Out-of sample $R^2$, OECD}
        \includegraphics[width=\linewidth,keepaspectratio=true]{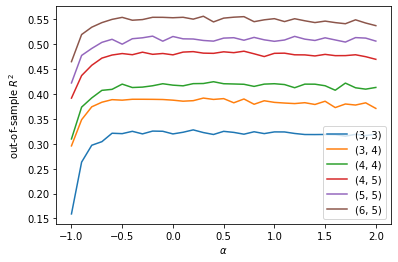}
    \end{subfigure}
    \begin{subfigure}[b]{.3\textwidth}
        \centering
        \caption{RSS/TSS, ORL}
        \includegraphics[width=\linewidth,keepaspectratio=true]{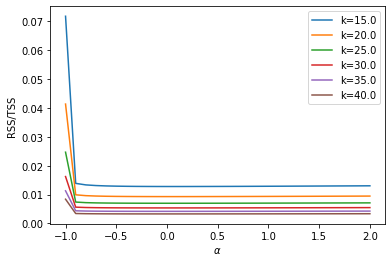}
    \end{subfigure}
    \begin{subfigure}[b]{.3\textwidth}
        \centering
        \caption{RSS/TSS, USPS}
        \includegraphics[width=\linewidth,keepaspectratio=true]{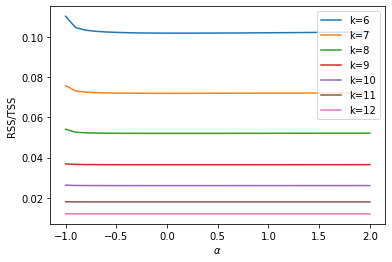}
    \end{subfigure}
    \caption{Choosing $\alpha$ by cross validation using different metrics.
        The values of $\alpha$ are from -1 to 2 with step size of 0.1.
        The out-of sample $R^2$ is defined in \eqref{eqn:out-of-sample-R2}.
    } \label{fig:appl-alpha}
\end{figure}

Tables \ref{table:orl-rec-error} and \ref{table:usps-rec-error} report values of the percentage of RSS/TSS for selected $\alpha$ and the optimal $\alpha$.
The optimal $\alpha$ is $0$ or is very close to $\alpha=0$ and the their differences of RSS/TSS are negligible ($10^{-6}$).  This is in line with our theoretical result.
The method with $\alpha=-1$ produces the largest errors.
The different between $\alpha=1$ and $2$ are small while both are a little worse than $\alpha=0$.

\begin{table}[htpb!]
    \centering
    \begin{tabular}{c|cccccc}
        \hline
        $\alpha$ & $15\times 15$ & $20\times 20$ & $25\times 25$ & $30 \times 30$ & $35 \times 35$ & $40 \times 40$ \\ \hline
        -1 & 7.1721 & 4.1329 & 2.4675 & 1.6245 & 1.1310 & 0.8411 \\
       \rowcolor[HTML]{EFEFEF} 0 & 1.2799 & 0.9308 & 0.7004 & 0.5390 & 0.4206 & 0.3315 \\
        1 & 1.2888 & 0.9372 & 0.7050 & 0.5428 & 0.4236 & 0.3339 \\
        2 & 1.3045 & 0.9489 & 0.7139 & 0.5497 & 0.4290 & 0.3383 \\\hline
        $\min\paran{RSS/TSS}$ & 1.2798 & 0.9307 & 0.7004 & 0.5390 & 0.4206 & 0.3315 \\
        $\alpha_{opt}$ & 0.1 & 0.1 & 0 & 0 & 0 & 0 \\\hline
    \end{tabular}%
    \caption{Percentage of the ORL reconstruction RSS/TSS ($\%$).
    The columns correspond to different values of latent dimension $k \times k$. }
    \label{table:orl-rec-error}
\end{table}

\begin{table}[htpb!]
    \centering
    \begin{tabular}{c|ccccccc}
        \hline
        $\alpha$ & $6\times 6$ & $7\times 7$ & $8\times 8$ & $9\times 9$ & $10\times 10$ & $11\times 11$ & $12\times 12$ \\ \hline
        -1 & 11.0150 & 7.5755 & 5.4047 & 3.6838 & 2.6256 & 1.8049 & 1.2059 \\
        \rowcolor[HTML]{EFEFEF} 0 & 10.1758 & 7.1874 & 5.1994 & 3.6413 & 2.6048 & 1.7944 & 1.1996 \\
        1 & 10.1945 & 7.1967 & 5.2027 & 3.6427 & 2.6055 & 1.7946 & 1.1997 \\
        2 & 10.2317 & 7.2124 & 5.2090 & 3.6458 & 2.6072 & 1.7954 & 1.2001 \\\hline
        $\min\paran{RSS/TSS}$ & 10.1749 & 7.1874 & 5.1993 & 3.6412 & 2.6047 & 1.7943 & 1.995 \\
        $\alpha_{opt}$ & 0.1 & 0.1  & 0.1  & 0.1 & 0.2 & 0.2 & 0.2 \\\hline
    \end{tabular}%
    \caption{Percentage of the USPS reconstruction RSS/TSS ($\%$).
        The columns correspond to different values of latent dimension $k \times k$. }
    \label{table:usps-rec-error}
\end{table}

Figure 17
and 18 
in Appendix E 
show images of $10$ different persons from the ORL and USPS data sets, respectively.
We use $15 \times 15$ latent dimension for the ORL faces and $9 \times 9$ for the USPS digits.
The $10$ images in the first row are the original images from the data set.
The $10$ images in the second row are the ones compressed by our method with $\alpha=-1$, which is the same as the $(2D)^2PCA$ algorithm.
The third, forth, and fifth rows corresponds to our method with $\alpha=0$, $1$, and $2$, respectively.
We observe visually that the proposed method with $\alpha=0$ produces the best compression result, while the method with $\alpha=-1$ performs the worst.
The differences between $\alpha=1$ and $2$ are very small and not visually detectable.

\section{Conclusion}  \label{sec:conclusion}

This paper studies the problem of estimating unknown parameters and latent factors from matrix-variate factor model.
Specifically, we preserve the structure of matrix-variate data and investigate theoretical properties in the setting that the each dimension of the matrix-variates ($p \times q$) is comparable to or greater than the number of observations ($T$).
The estimation procedure aggregates information of both first and second moments.
It incorporates traditional PCA based methods as a special case.
We derive some inferential theory concerning the estimators, including the rate of convergence and limiting distributions.
In contrast to previous estimation methods based on auto-covariance, we use more information based on the contemporary data and are also able to consistently estimate the loading matrices and factor matrices for uncorrelated matrix observations when the auto-covariance method can not.
In addition, our results are obtained under very general conditions that allow for correlations across time, rows and columns.

\section*{Acknowledgments}
We would like to thank the Associate Editor and three anonymous reviewers for their comments that helped us improve this paper.
We also thank Ellen Li for preparing the initial codes for the simulation and real data analysis.

\singlespacing
\bibliographystyle{\mybibsty}
\bibliography{\mybibfile}

%

\end{document}